\documentclass[12pt]{amsart}
\usepackage{amscd, amssymb,latexsym,amsmath, amscd, amsmath}
\usepackage[all]{xy}
\usepackage{anysize}
\usepackage[sc]{mathpazo} 
\usepackage{color}
\usepackage{enumitem}
\usepackage{pst-node}
\marginsize{2.5cm}{2.5cm}{2.5cm}{2.5cm}
\linespread{1.05}

\newtheorem{theorem}{Theorem}[section] 
\newtheorem{lemma}[theorem]{Lemma}
\newtheorem{corollary}[theorem]{Corollary}
\newtheorem{proposition}[theorem]{Proposition}
\theoremstyle{definition}




\theoremstyle{remark}
\newtheorem{remark}{Remark}



\newcommand{\U}{\mathcal{U}}

\newcommand{\wt}{\widetilde}

\newcommand{\C}{\mathbb{C}}

\newcommand{\N}{\mathbb{N}}

\newcommand{\Nm}{\mathbb{N}m}

\newcommand{\Hom}{{\rm Hom}}

\def\SL{{\rm SL}}
\def\SU{{\rm SU}}

\def\SU{{\rm SU}}
\def\U{{\rm U}}

\def\GL{{\rm GL}}
\def\PGL{{\rm PGL}}

\def\Gal{{\rm Gal}}

\begin{document}

\title[Distinguished representations for $SL(n)$]
{Distinguished representations for $SL(n)$}
\author{U. K. Anandavardhanan and Dipendra Prasad}

\address{Department of Mathematics, Indian Institute of Technology Bombay, Mumbai - 400076, India.}
\email{anand@math.iitb.ac.in}

\address{School of Mathematics, Tata Institute of Fundamental Research, Mumbai - 400005, India.}
\email{prasad.dipendra@gmail.com}

\subjclass{Primary 22E50; Secondary  11F70, 11F85}

\date{}

\begin{abstract}
For $E/F$ a quadratic extension of local fields, and $\pi$ an irreducible admissible 
generic representation of $\SL_n(E)$, we calculate  the dimension of $\Hom_{\SL_n(F)}[\pi,\C]$ and relate it to fibers of the base
change map corresponding to base change of representations of $\SU_n(F)$ to $\SL_n(E)$ as suggested in \cite{pra16}. We also deal with
finite fields.
\end{abstract}

\maketitle
{\hfill \today}
\tableofcontents

\section{Introduction}\label{intro}
The paper \cite{pra16} formulates a  general conjecture - in terms of Langlands parameters, more specifically in terms of fibers of a certain base change map - on the dimension of the space 
$\Hom_{G(F)}[\pi,\C]$ 
for an irreducible admissible representation $\pi$ of $G(E)$ where $G$ is a general 
reductive group over a local field $F$, and $E/F$ is a quadratic extension of fields. 
In this paper, we consider the case of $G = \SL_n$. 
The main theorem of this paper, Theorem \ref{thm-sln}, computes $\dim_\C \Hom_{\SL_n(F)}[\pi,\C]$ 
for an irreducible admissible generic representation $\pi$ of $\SL_n(E)$ 
in terms of the fiber of the base change map from $\SU(n)$ to $\SL_n(E)$, and 
thus confirms the general conjecture in \cite{pra16} for $G=\SL_n$. 
The dimension of $\Hom_{\SL_n(F)}[\pi,\C]$ 
was computed earlier in \cite{ap03} when $n=2$ 
and in \cite{ana05} when $n$ is odd.
This paper could be considered a natural sequel to these two works, but now considered more from the point of view of base change from unitary groups. 

The symmetric space $(\SL_2(E),\SL_2(F))$ studied in \cite{ap03} was
 the first example in the literature which is not a supercuspidal Gelfand pair, that is to say the symmetric 
space affords irreducible supercuspidal representations with multiplicity $>1$.
In contrast with the $n=2$ case, when $n$ is odd, it was proved in \cite{ana05} that the symmetric space 
$(\SL_n(E),\SL_n(F))$ 
is a Gelfand pair, i.e., for any irreducible admissible representation $\pi$ of $\SL_n(E)$, $\dim_\C \Hom_{\SL_n(F)}[\pi,\C] \leq 1.$ 
In this paper we reconsider the multiplicity one theorem 
 of \cite{ana05} for  $(\SL_n(E),\SL_n(F))$ 
as well as go a little further for $n$ even.

The pair $(\SL_n(E),\SL_n(F))$ is much simpler than the general pair $(G(E),G(F))$ among other things because the 
adjoint group of $\SL_n(E)$, i.e., $\PGL_n(E)$,  operates transitively on an $L$-packet of $\SL_n(E)$, 
and in fact $\PGL_n(F)$ operates 
transitively on those representations of $\SL_n(E)$ in a given generic $L$-packet of $\SL_n(E)$ for which  
$\Hom_{G(F)}[\pi,\C] \not = 0$, and clearly $\dim_\C\Hom_{G(F)}[\pi,\C]$ is the same for all 
 representations of $\SL_n(E)$ which are conjugate under $\PGL_n(F)$. 

Before we end the introduction, let us briefly describe the main ingredients in this work. There are two non-obvious inputs in our work.
First, a recent work of Matringe describes exactly which generic representations of $\GL_n(E)$ are distinguished by $\GL_n(F)$ \cite{mat11}. This 
allows one to make some headway into understanding      $\dim_\C \Hom_{\SL_n(F)}[\pi,\C]$ where 
$\pi$ is an irreducible, admissible generic representation of $\SL_n(E)$ which  is distinguished by $\SL_n(F)$ and 
is contained in an irreducible representation $\wt{\pi}$ of $\GL_n(E)$ distinguished by $\GL_n(F)$. Second, we are able to say that inside $\wt{\pi}$ the only irreducible, admissible representation of $\SL_n(E)$ which  are 
distinguished by $\SL_n(F)$ are conjugates of $\pi$ by $\GL_n(F)$, which follows from a  more precise result according to
which an irreducible, admissible generic representation of $\SL_n(E)$ which  is distinguished by $\SL_n(F)$ must have a Whittaker model for a non-degenerate character of $N(E)/N(F)$ where $N$ is the group of upper-triangular unipotent matrices. This is a consequence of some recent
work of the first author with Matringe \cite{am16}, for which we have  given a more direct proof but one which is valid only for tempered representations, or more generally unitary representations.
 
Most of the paper is written both for $p$-adic as well as finite fields since methods are essentially uniform, and since $\dim_\C \Hom_{\SL_n(F)}[\pi,\C]$ for $F$ a finite field 
was not known in any precise way in the literature.
\section{Preliminaries}\label{prelim}

In this paper, $E/F$ is a quadratic extension of either a $p$-adic or a finite field. Let $\widetilde{G}=\GL_n(E)$, $\widetilde{H}=\GL_n(F)$, $G=\SL_n(E)$, and $H=\SL_n(F)$. An irreducible admissible representation of $\widetilde{G}$ is denoted by $\widetilde{\pi}$ and that of $G$ is denoted by $\pi$. Let $\sigma$ be the non-trivial element of the Galois group Gal$(E/F)$. Let $\N m:E^\times \rightarrow F^\times$ be the norm map.
If $F$ is $p$-adic, the quadratic character of $F^\times/N_{E/F}(E^\times)$ is denoted by $\omega_{E/F}$; if $F$ is finite, we let $\omega_{E/F}=1$. 

For a $p$-adic field $k$, let $W^\prime_k$ be its Weil-Deligne group. A Langlands parameter of $W^\prime_k$ valued in $\GL(n,\mathbb C)$, for some $n$, is typically denoted by $\widetilde{\rho}$ and a Langlands parameter of $W^\prime_k$ valued in $\PGL_n(\mathbb C)$ is typically denoted by $\rho$. 

For a representation $\tau$ of a group, $\tau^\vee$ stands for the contragredient representation, 
and $\omega_\tau$ denotes its central character (if it has one). For a representation $\tau$ of $\widetilde{G}$ or $G$, $\tau^\sigma$ is the Galois conjugate representation given by $\tau^\sigma(g)=\tau(g^\sigma)$. Similarly for a Langlands parameter $\tau$ of $W^\prime_E$, its Galois conjugate is given by $\tau^\sigma(g)=\tau(\sigma^{-1} g \sigma)$. A representation $\pi$ of $\GL_n(E)$ (or its Langlands parameter) 
is said to be conjugate self-dual if $\pi^\sigma \cong \pi^\vee$. Conjugate self-dual representations of $\GL_n(E)$ (or its Langlands parameter) come in two flavors (not mutually exclusive!): 
conjugate orthogonal and conjugate symplectic; we refer to \cite{ggp12} for the definition. This paper will deal exclusively with conjugate
orthogonal representations/parameters since they are the only ones relevant for distinction by $\GL_n(F)$.

For a character $\alpha$ of $F^\times$, 
an irreducible admissible representation $\wt{\pi}$ of $\GL_n(E)$ is said to be
$\alpha$-distinguished 
if $$\Hom_{\GL_n(F)}[\wt{\pi},\alpha]\not = 0;$$
here, as elsewhere in the paper, we identify  a character $\alpha$ of $F^\times$ to a character of $\GL_n(F)$ via the determinant map $\det : \GL_n(F) \rightarrow F^\times$. If $\alpha=1$, an $\alpha$-distinguished representation is also said to be distinguished by $\GL_n(F)$. 
 
The most basic result about distinguished representations for $(\GL_n(E),\GL_n(F))$ is the following result due to Flicker which is proved by the well-known Gelfand-Kazhdan method \cite[Propositions 11 \& 12]{fli91}.
\begin{proposition}\label{prop-flicker}
If $\widetilde{\pi}$ is an irreducible admissible representation of $\GL_n(E)$ which is $\GL_n(F)$-distinguished, then 
\[\dim_\C {\rm Hom}_{\GL_n(F)}[\widetilde{\pi},\C] = 1,\] 
and furthermore, $\widetilde{\pi}^\vee \cong \widetilde{\pi}^\sigma$ (and also $\omega_{\widetilde{\pi}}|_{_{F^\times}}=1$).
\end{proposition}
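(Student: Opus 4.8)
The plan is to run the Gelfand--Kazhdan method for the pair $(\GL_n(E),\GL_n(F))$, exactly as the statement advertises. The abstract engine is the following criterion: if $\theta$ is an anti-involution of $\GL_n(E)$ with $\theta(\GL_n(F))=\GL_n(F)$, and if every distribution on $\GL_n(E)$ invariant under the two-sided translation action of $\GL_n(F)\times\GL_n(F)$ is also fixed by $\theta$, then for every irreducible admissible $\widetilde{\pi}$ one has
\[
\dim_\C\Hom_{\GL_n(F)}[\widetilde{\pi},\C]\cdot\dim_\C\Hom_{\GL_n(F)}[\widetilde{\pi}^\vee,\C]\le 1 .
\]
I would take $\theta(g)={}^t g$, the transpose, which manifestly preserves $\GL_n(F)$. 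Granting the distributional input, the multiplicity-one assertion follows by two easy moves. First, a scalar matrix $zI$ with $z\in F^\times$ lies in $\GL_n(F)$, so any nonzero $\ell\in\Hom_{\GL_n(F)}[\widetilde{\pi},\C]$ satisfies $\ell=\ell\circ\widetilde{\pi}(zI)=\omega_{\widetilde{\pi}}(z)\,\ell$, forcing $\omega_{\widetilde{\pi}}|_{F^\times}=1$. Second, the Gelfand--Kazhdan automorphism $\theta_0(g)={}^tg^{-1}$ realizes the contragredient, $\widetilde{\pi}^\vee\cong\widetilde{\pi}^{\theta_0}$, and since $\theta_0$ also preserves $\GL_n(F)$ the same $\ell$ shows $\widetilde{\pi}^\vee$ is distinguished whenever $\widetilde{\pi}$ is. Hence both factors above are $\ge 1$, so each equals $1$.

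The heart of the argument, and the step I expect to be the main obstacle, is the distributional claim that every $\GL_n(F)\times\GL_n(F)$-invariant distribution on $\GL_n(E)$ is transpose-invariant. I would attack this by Bernstein's localisation principle: stratify $\GL_n(E)$ by its $\GL_n(F)\times\GL_n(F)$-orbits, and on each stratum reduce the claim, via the analysis of distributions supported on a closed orbit together with their transverse normal derivatives, to two things. The first is that each double coset $\GL_n(F)\,g\,\GL_n(F)$ is stable under $g\mapsto{}^tg$, which is a concrete normal-form computation for $\GL_n(F)$-double cosets in $\GL_n(E)$. The second is that no nonzero transverse jets along the non-open orbits can obstruct $\theta$-invariance; controlling these transverse contributions is the delicate point, and is precisely where Flicker's analysis does its real work.

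Finally, for the \emph{furthermore} that $\widetilde{\pi}^\vee\cong\widetilde{\pi}^\sigma$, I would run the same machinery with the $\sigma$-twisted anti-involution $\theta'(g)=\sigma({}^tg)$, which one checks is again an anti-involution fixing $\GL_n(F)$ and for which $\widetilde{\pi}^{\theta'}\cong(\widetilde{\pi}^\sigma)^\vee$. Choosing invariant functionals on $\widetilde{\pi}$ and on $\widetilde{\pi}^\vee$ (both distinguished, as above) produces a nonzero bi-$\GL_n(F)$-invariant Bessel distribution on $\GL_n(E)$; its invariance under $\theta'$, when matched against its representation-theoretic expression, yields a nonzero $\GL_n(E)$-intertwining operator $\widetilde{\pi}^\sigma\to\widetilde{\pi}^\vee$, an isomorphism by Schur's lemma and the irreducibility of $\widetilde{\pi}$. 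This conjugate self-duality is exactly the condition that $\widetilde{\pi}$ be fixed by the involution cutting out the unitary group, which is the shadow of base change from $\SU_n$ that drives the rest of the paper.
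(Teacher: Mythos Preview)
The paper does not supply its own proof of this proposition; it simply attributes the result to Flicker \cite[Propositions~11 \& 12]{fli91} and notes that it is proved ``by the well-known Gelfand--Kazhdan method.'' Your proposal is exactly that method, so in spirit it coincides with the cited argument. One small remark: Flicker in fact runs the whole argument with the single anti-involution $\theta'(g)=\sigma({}^{t}g)$, which already fixes $\GL_n(F)$ pointwise, and extracts both the multiplicity-one statement and the isomorphism $\widetilde{\pi}^\vee\cong\widetilde{\pi}^\sigma$ from invariance of bi-$\GL_n(F)$-invariant distributions under this one map; your two-step version with $\theta$ and then $\theta'$ is an acceptable variant and leads to the same conclusions.
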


The following theorem due to Matringe \cite[Theorem 5.2]{mat11} is much more precise (which builds on the earlier works on discrete series representations \cite{kab04,akt04}).
 
\begin{proposition}\label{prop-matringe}
Let $\widetilde{\pi}$ be an irreducible admissible generic representation of $\GL_n(E)$ which is conjugate self-dual. Write
\[\widetilde{\pi} \cong \Delta_1 \times \dots \times \Delta_t \]
as the representation parabolically induced from irreducible quasi square integrable representations $\Delta_i$ of $\GL_{n_i}(E)$, with
$n=n_1+\dots+n_t$, and where the $\Delta_i$'s are not linked. Then, $\widetilde{\pi}$ is distinguished with respect to $\GL_n(F)$ if and only if, after a reordering of the indices if necessary, $\Delta_{i+1}^\sigma \cong \Delta_i^\vee$,
for $i=1,3,\dots,2r-1$, for some $r$, $\Delta_i^\sigma \cong \Delta_i^\vee$, for $2r <i \leq t$, and the discrete series representations $\Delta_i$ of $\GL_{n_i}(E)$ are distinguished by $\GL_{n_i}(F)$.
\end{proposition}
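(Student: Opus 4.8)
The plan is to prove the two implications separately, taking the discrete series case of \cite{kab04,akt04} as the known input for the indecomposable building blocks and using the Bernstein--Zelevinsky geometric lemma to pass between $\widetilde{\pi}=\Delta_1\times\cdots\times\Delta_t$ and its factors. Write $P=MN$ for the standard parabolic of $\GL_n$ with Levi $M=\GL_{n_1}\times\cdots\times\GL_{n_t}$, so that $\widetilde{\pi}=\Ind_{P(E)}^{\GL_n(E)}(\Delta_1\boxtimes\cdots\boxtimes\Delta_t)$, and recall from Proposition \ref{prop-flicker} that distinction forces $\widetilde{\pi}^\sigma\cong\widetilde{\pi}^\vee$, which is our standing hypothesis.

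\textbf{Sufficiency.} The ``if'' direction rests on two building blocks assembled by an open-orbit argument. First, for \emph{any} irreducible $\Delta$ of $\GL_m(E)$ the representation $\Delta\times(\Delta^\vee)^\sigma$ of $\GL_{2m}(E)$ is $\GL_{2m}(F)$-distinguished: decomposing its restriction to $\GL_{2m}(F)$ along the orbits on the flag variety of the $(m,m)$-parabolic, the unique open orbit has stabiliser a Galois form of $\GL_m(E)$ embedded by $g\mapsto(g,g^\sigma)$, and hence contributes $\Hom_{\GL_m(E)}[\Delta,\Delta]$, which is nonzero by Schur. Second, a conjugate self-dual discrete series $\Delta_i$ distinguished by $\GL_{n_i}(F)$ is by hypothesis a distinguished singleton. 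Granting the standard fact that an \emph{irreducible} product of $\GL(F)$-distinguished generic representations is again distinguished (the split diagonal orbit contributes the tensor product of the individual functionals), the unlinked product $\widetilde{\pi}$ of the $r$ distinguished pairs $\Delta_{2j-1}\times\Delta_{2j}$ and the $t-2r$ distinguished singletons is distinguished.

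\textbf{Necessity.} Assume $\widetilde{\pi}$ is distinguished. Since the segments $\{\Delta_1,\dots,\Delta_t\}$ are determined up to reordering, conjugate self-duality $\widetilde{\pi}^\sigma\cong\widetilde{\pi}^\vee$ says precisely that this multiset is stable under the involution $\Delta\mapsto(\Delta^\vee)^\sigma$. First I would peel off, in pairs, every segment $\Delta$ with $(\Delta^\vee)^\sigma\not\cong\Delta$ together with its partner, reducing to the case in which every remaining segment is conjugate self-dual; for each such discrete series the dichotomy of \cite{kab04,akt04} says that exactly one of $\Delta$ (conjugate orthogonal) or $\Delta\otimes\omega_{E/F}$ (conjugate symplectic) is distinguished. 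The conjugate orthogonal segments are the candidate singletons; it remains to show that every conjugate symplectic segment must occur with even multiplicity, so that it is absorbed into a pair $\{\Delta,(\Delta^\vee)^\sigma\}$ via $\Delta^\sigma\cong\Delta^\vee$.

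\textbf{Main obstacle.} The crux is exactly this last reduction: distinction of $\widetilde{\pi}$ must force a \emph{perfect} matching of the non-conjugate-orthogonal segments, and in particular an unmatched conjugate symplectic atom must obstruct distinction. This does not follow from the existence of an invariant functional on any single factor; one must push the geometric lemma to an upper bound, filtering $\widetilde{\pi}|_{\GL_n(F)}$ by the double cosets $P(E)\backslash\GL_n(E)/\GL_n(F)$ and showing that the only orbits carrying a nonzero $\GL_n(F)$-invariant functional are the diagonal ones realising a pairing or a distinguished singleton, while the remaining orbits contribute nothing against a leftover conjugate symplectic segment. One efficient route, followed in \cite{mat11}, encodes this bookkeeping through the Asai $L$-function and its multiplicativity under parabolic induction; the essential point — and the part that is genuinely hard, as opposed to the formal sufficiency direction — is that it is not the mere existence of a pole at $s=0$ but the precise (exceptional) pole structure that detects distinction, so that a conjugate orthogonal segment elsewhere cannot compensate for an unmatched conjugate symplectic one. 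Establishing this vanishing/upper bound is where I expect the real work to lie.
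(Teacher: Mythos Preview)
The paper does not give its own proof of this proposition: it is quoted as \cite[Theorem 5.2]{mat11} (building on \cite{kab04,akt04}) and used as a black box. So there is no in-paper argument to compare against; what you have written is a sketch of Matringe's theorem itself.

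On the substance of your sketch: the sufficiency direction is essentially correct and is indeed proved by the open-orbit/closed-orbit argument you indicate. For necessity, however, your ``peeling off'' step is not a reduction in the sense you suggest. Knowing that $\widetilde{\pi}$ is distinguished and that $\Delta$ and $(\Delta^\vee)^\sigma$ occur among the $\Delta_i$ does \emph{not} by itself let you remove that pair and conclude the remaining product is distinguished; distinction is not known a priori to behave multiplicatively in that direction. The whole content of the necessity proof is the double-coset analysis you defer to the ``Main obstacle'' paragraph: one filters $\Hom_{\GL_n(F)}[\widetilde{\pi},\C]$ by $P(E)\backslash \GL_n(E)/\GL_n(F)$ and shows, orbit by orbit (using unlinkedness to kill the non-admissible orbits and the modulus/exponent bookkeeping to kill the admissible but non-relevant ones), that any nonzero functional forces exactly the pairing-and-singleton pattern in the statement. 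Your description of this via the Asai $L$-function is one way to package the same orbit calculation, but either way the argument is a single global analysis of $\widetilde{\pi}|_{\GL_n(F)}$, not an inductive peeling. As written, your necessity outline has a genuine gap at the point where you ``reduce to the case in which every remaining segment is conjugate self-dual''; that reduction is precisely what the hard part proves, not an input to it.
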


The following result was known as the Flicker-Rallis conjecture, and  is now a theorem by combining \cite[Lemma 2.2.1]{mok15} (see also \cite[Theorem 8.1]{ggp12}) and \cite[Theorem 5.2]{mat11}. 

\begin{theorem}\label{conj-fr}
An irreducible admissible generic representation $\widetilde{\pi}$ of $\GL_n(E)$ is distinguished by $\GL_n(F)$  if $n$ is odd, and $\omega_{E/F}$-distinguished if $n$ is even, if and only if its Langlands parameter is in the image of the restriction map 
\[\Phi:H^1(W^\prime_F,\widehat{\U}_n) \rightarrow H^1(W^\prime_E,\GL_n(\mathbb C)),\]
where $\widehat{\U}_n$ is the Langlands dual group of a unitary group defined by a hermitian space of dimension $n$ over $E$ (which comes equipped with an action of $W^\prime_F$).
Equivalently, an irreducible admissible generic representation of $\GL_n(E)$ is distinguished by
$\GL_n(F)$ precisely when it is a conjugate orthogonal representation.
\end{theorem}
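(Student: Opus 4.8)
The plan is to prove both asserted equivalences by reducing to the level of discrete series, where distinction and conjugate orthogonality are each understood, and then transporting these facts through parabolic induction by means of Matringe's criterion (Proposition \ref{prop-matringe}). I would first dispatch the ``Equivalently'' clause, which is purely a statement about parameters: by \cite[Lemma 2.2.1]{mok15} (equivalently \cite[Theorem 8.1]{ggp12}) the image of the restriction map $\Phi$ is precisely the set of conjugate orthogonal parameters, a homomorphism $W'_E \to \GL_n(\C)$ lying in that image exactly when it extends compatibly to ${}^L\U_n = \GL_n(\C) \rtimes \Gal(E/F)$. Thus, once the first equivalence is established, the second is immediate.

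For the first equivalence the base case is a discrete series representation $\Delta$ of $\GL_m(E)$, where ``$\Delta$ is distinguished by $\GL_m(F)$'' is equivalent to conjugate orthogonality of the parameter of $\Delta$; this is the Asai L-function criterion of \cite{kab04, akt04}, to the effect that the Asai L-function has a pole at $s=0$ exactly in the distinguished case and that this pole detects the orthogonal sign under the local Langlands correspondence. Granting this, write the general generic $\widetilde\pi \cong \Delta_1 \times \cdots \times \Delta_t$ as in Proposition \ref{prop-matringe}, so that its parameter is $\rho = \bigoplus_i \rho_i$ with $\rho_i$ the parameter of $\Delta_i$. Matringe's condition sorts the $\Delta_i$ into conjugate-dual pairs $\Delta_{i+1}^\sigma \cong \Delta_i^\vee$ and conjugate-self-dual distinguished pieces $\Delta_i^\sigma \cong \Delta_i^\vee$. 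On the parameter side a pair $\rho_i \oplus \rho_{i+1}$ with $\rho_{i+1}^\sigma \cong \rho_i^\vee$ carries the hyperbolic conjugate-orthogonal form and so is automatically conjugate orthogonal, while a self-dual distinguished $\Delta_i$ yields a conjugate orthogonal $\rho_i$ by the base case. Since an orthogonal direct sum of conjugate orthogonal pieces is again conjugate orthogonal, and conversely every conjugate orthogonal $\rho$ breaks up into blocks of exactly these two shapes, Matringe's combinatorial condition holds if and only if $\rho$ is conjugate orthogonal.

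It remains to reconcile the distinction character with the parity of $n$. The Flicker--Rallis period attached to base change from $\U_n$ realizes a base-changed representation as $\omega_{E/F}^{n-1}$-distinguished by $\GL_n(F)$, and $\omega_{E/F}^{n-1}$ is trivial for odd $n$ and equals $\omega_{E/F}$ for even $n$; this is the origin of the dichotomy in the statement. It is also consistent with the central-character constraint $\omega_{\widetilde\pi}|_{F^\times}=1$ of Proposition \ref{prop-flicker}, since $(\omega_{E/F}^{n-1})^{n}$ is trivial for every parity. The only genuinely deep input, and the main obstacle, is the discrete series base case: one must identify the analytic notion of distinction, the pole of the Asai L-function, with conjugate orthogonality of the parameter under the local Langlands correspondence, while tracking the orthogonal versus symplectic signs that decide which conjugate-self-dual discrete series are actually distinguished.
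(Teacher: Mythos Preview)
Your approach is exactly the one the paper has in mind: the paper does not supply a proof but simply records that the result follows by combining \cite[Lemma 2.2.1]{mok15} (or \cite[Theorem 8.1]{ggp12}) with \cite[Theorem 5.2]{mat11}, and your proposal is precisely the fleshing-out of that combination---Mok/GGP for the description of the image of $\Phi$, Matringe to reduce the generic case to discrete series, and Kable/AKT for the discrete series base case.

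One point deserves tightening. Your first paragraph asserts that the image of $\Phi$ consists of the conjugate \emph{orthogonal} parameters; as the paper itself notes in \S\ref{sub-parameters}, the image of $\Phi$ consists of conjugate self-dual parameters of parity $(-1)^{n-1}$, so conjugate orthogonal for $n$ odd but conjugate \emph{symplectic} for $n$ even. Thus the two clauses of the theorem are not literally the same statement, and the ``Equivalently'' is not immediate from Mok alone. The bridge is the twist: for $n$ even, $\widetilde\pi$ is $\omega_{E/F}$-distinguished iff $\widetilde\pi\otimes\widetilde\omega^{-1}$ is distinguished (where $\widetilde\omega$ is any extension of $\omega_{E/F}$ to $E^\times$), and $\widetilde\pi$ is conjugate symplectic iff $\widetilde\pi\otimes\widetilde\omega^{-1}$ is conjugate orthogonal. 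You gesture at this in your third paragraph via the $\omega_{E/F}^{n-1}$ period, but the argument as written conflates the two parities; once you insert this twist explicitly, the two equivalences line up and your reduction via Matringe goes through cleanly.
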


For finite fields, we have the following result due to Gow \cite[Theorem 3.6]{gow84} (see also \cite{pra99}).

\begin{proposition}\label{Gow} For $E/F$ a quadratic extension of finite fields,  an 
irreducible representation $\widetilde{\pi}$ of $\GL_n(E)$ is distinguished by $\GL_n(F)$  if and only if $\wt{\pi}^\sigma \cong \wt{\pi}
^\vee$.
\end{proposition}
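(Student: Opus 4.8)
The plan is to obtain the proposition from the Gelfand--Kazhdan method, exactly as in Flicker's $p$-adic Proposition \ref{prop-flicker}, combined with a comparison of two enumerations. Throughout write $G=\GL_n(E)$ and $H=\GL_n(F)$, and recall that by Frobenius reciprocity $\dim_\C\Hom_H[\widetilde{\pi},\C]$ is the multiplicity of the trivial representation of $H$ in $\widetilde{\pi}|_H$, so that
\[
\dim_\C\Hom_H[\widetilde{\pi},\C]=\frac{1}{|H|}\sum_{h\in H}\chi_{\widetilde{\pi}}(h),
\]
where $\chi_{\widetilde{\pi}}$ is the character of $\widetilde{\pi}$.

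First I would treat multiplicity one and the ``only if'' implication simultaneously. Consider the anti-involution $\iota(g)={}^t(g^\sigma)$ of $G$: it is an involutive anti-automorphism fixing $H$ setwise, since $h^\sigma=h$ for $h\in H$ and $H$ is stable under transpose. The point to verify is that every $H$-bi-invariant function on $G$ is $\iota$-invariant, i.e. that ${}^t(g^\sigma)\in HgH$ for every $g$. Granting this Gelfand--Kazhdan input, the standard formalism yields both $\dim_\C\Hom_H[\widetilde{\pi},\C]\le 1$ and the necessary condition that a distinguished $\widetilde{\pi}$ satisfies $\widetilde{\pi}^\iota\cong\widetilde{\pi}^\vee$. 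As $g$ and ${}^t g$ are conjugate in $G$, the character of $\widetilde{\pi}^\iota$ agrees with that of $\widetilde{\pi}^\sigma$, so $\widetilde{\pi}^\iota\cong\widetilde{\pi}^\sigma$ and the condition becomes $\widetilde{\pi}^\sigma\cong\widetilde{\pi}^\vee$. This settles multiplicity one and the ``only if'' direction at once.

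For the converse I would argue by counting, using that $H$ is the fixed-point group of the order-two automorphism $\sigma$ of $G$. Since the pair is multiplicity free, $\Ind_H^G\C$ is the sum, each occurring once, of exactly the $H$-distinguished irreducibles; hence the number of distinguished $\widetilde{\pi}$ equals $\dim\End_G(\Ind_H^G\C)=|H\backslash G/H|$ by Mackey's formula. On the other hand, the number of conjugate self-dual irreducibles, i.e. those fixed by the involution $\widetilde{\pi}\mapsto\widetilde{\pi}^{\sigma\vee}$ of $\mathrm{Irr}(G)$, equals the number of conjugacy classes fixed by the dual involution $[g]\mapsto[(g^\sigma)^{-1}]$, that is, the number of classes with $g\sim(g^\sigma)^{-1}$. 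It therefore suffices to prove
\[
|H\backslash G/H|=\#\{[g]\in\mathrm{Cl}(G):g\sim(g^\sigma)^{-1}\}.
\]
Once this identity is in hand, the ``only if'' direction shows the distinguished irreducibles form a subset of the conjugate self-dual ones, and the equality of cardinalities forces every conjugate self-dual representation to be distinguished.

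The main obstacle is precisely this matching of the two counts. I would establish it geometrically: by the Lang--Steinberg theorem the map $g\mapsto g\,(g^\sigma)^{-1}$ surjects $G$ onto the symmetric space $S=\{s\in G:s^\sigma=s^{-1}\}$ with fibers the left cosets $gH$, so that $H\backslash G/H$ is identified with the orbits of $H$ on $S$ under conjugation (twisted conjugation restricting to ordinary conjugation on $H$). One then matches $H$-conjugacy classes in $S$ with the classes $[g]$ satisfying $g\sim(g^\sigma)^{-1}$ via a rational-canonical-form analysis of the condition $s^\sigma=s^{-1}$, reducing to a comparison of conjugacy in $G$ and in $H$ for the semisimple and unipotent pieces. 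The bookkeeping for eigenvalues lying in $E\setminus F$ and for non-semisimple classes is where the real work lies; here I would lean on the parametrization of conjugacy classes of $\GL_n$ over a finite field by functions on monic irreducible polynomials to carry out the count explicitly, or alternatively invoke Gow's original enumeration directly.
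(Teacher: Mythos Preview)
The paper does not give its own proof of this proposition; it simply records the result as due to Gow \cite{gow84} (see also \cite{pra99}). So there is no in-paper argument to compare against, and your outline is very much in the spirit of Gow's original proof.

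Your overall architecture is correct: Gelfand--Kazhdan with the anti-involution $\iota(g)={}^t(g^\sigma)$ yields multiplicity one and the ``only if'' direction, and then a cardinality match between $|H\backslash G/H|$ and the number of $\sigma$-self-dual classes forces the ``if'' direction. The use of Brauer's permutation lemma to count the latter, and of Mackey plus multiplicity-freeness to count the former, is exactly right.

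The concern is that you explicitly defer both of the nontrivial inputs. First, the double-coset statement $H\,g\,H=H\,\iota(g)\,H$ is only ``granted''; this is precisely the lemma that needs proof (Gow establishes it via an explicit matrix reduction akin to Bruhat decomposition). Second, and more seriously, the identity
\[
|H\backslash G/H|\;=\;\#\{[g]\in\mathrm{Cl}(G):g\sim(g^\sigma)^{-1}\}
\]
is the crux, and your final sentence proposes to ``invoke Gow's original enumeration directly''. That is circular: Gow's enumeration \emph{is} the proof of the proposition you are trying to establish. Your geometric reduction via $g\mapsto g(g^\sigma)^{-1}$ and Hilbert~90 correctly identifies $H\backslash G/H$ with $H$-conjugacy classes in $S=\{s\in G:s^\sigma=s^{-1}\}$, but to finish you must still show that (i) every $G$-class with $g\sim(g^\sigma)^{-1}$ actually meets $S$, and (ii) $G$-conjugate elements of $S$ are already $H$-conjugate. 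Both hold for $\GL_n$ (essentially because rational canonical form works over any field and is insensitive to field extension), but neither is automatic and both require the elementary-divisor bookkeeping you allude to without carrying out.

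A shorter alternative for the ``if'' direction, closer to \cite{pra99}, is to show directly that the twisted Frobenius--Schur sum $|G|^{-1}\sum_{g\in G}\chi_{\wt\pi}(g\,g^\sigma)$ computes $\dim\Hom_H[\wt\pi,\C]$ (this uses Hilbert~90 for $\GL_n$ to match the fibres), and then to observe that for $\GL_n$ every conjugate-self-dual representation carries a $\sigma$-symmetric pairing, forcing the indicator to equal~$1$.
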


As this paper deals with representations of $\SL_n(E)$ through restriction of representations 
from $\GL_n(E)$ to $\SL_n(E)$, and similarly deals with representations of special unitary groups through restriction of representations 
from unitary groups, we will need to use twisting representations of $\GL_n(E)$, or parameters of them,
 by characters of $E^\times$, 
or in the case of unitary groups, by characters of $E^\times/F^\times$.

This motivates us to introduce {\it Strong and Weak Equivalences} among representation of $\GL_n(E)$, or parameters of them. 

Two Langlands parameters of $W^\prime_E$ with values in $\GL_n(\C)$ will be said to be weakly equivalent 
if they are twists of each other by a character of $E^\times$, and they will be  
said to be strongly equivalent if they are twists of each other by a character of $E^\times/F^\times$, i.e., 

\begin{equation}\label{eq-weak}
\widetilde{\rho}_2 \sim_w \widetilde{\rho}_1 \iff \widetilde{\rho}_2 \cong \widetilde{\rho}_1 \otimes \chi {\rm ~for~} \chi:E^\times \rightarrow \mathbb C^\times, 
\end{equation}
and
\begin{equation}\label{eq-strong}
\widetilde{\rho}_2 \sim_s \widetilde{\rho}_1 \iff \widetilde{\rho}_2 \cong \widetilde{\rho}_1 \otimes \chi {\rm ~for~} \chi:E^\times/F^\times \rightarrow \mathbb C^\times. 
\end{equation}
We denote the weak (resp. strong) equivalence class by $[\cdot]_w$ (resp. $[\cdot]_s$), and the set of strong equivalence classes 
in the weak equivalence class containing a representation $\wt{\pi}$ by $[\wt{\pi}]_w/\sim_s$.

In this paper, we will use these equivalence relations among {\it conjugate orthogonal representations}.
If $\widetilde{\rho} $ is a conjugate orthogonal representation, 
the number of strong equivalence classes of $\widetilde{\rho}$ in the weak equivalence class of $\widetilde{\rho}$ 
(among conjugate orthogonal representations) 
will be  denoted by $q(\widetilde{\rho})$.

Clearly, the same notions can be defined on the class of irreducible admissible conjugate orthogonal representations of $\GL_n(E)$, 
and as for parameters, we will denote by $q(\widetilde{\pi})$
the number of strong equivalence classes of $\widetilde{\pi}$ in the weak equivalence class of $\widetilde{\pi}$ 
(among conjugate orthogonal representations).

We remark that {\it Strong and Weak Equivalences} among representation of $\GL_n(E)$ was first introduced in \cite{ana05}.

\section{Distinction for $(\SL_n(E),\SL_n(F))$}\label{sub-sln-dist} 

The subgroup of $\GL_n(E)$ defined by 
\[\GL_n(E)^+=\{g \in \GL_n(E) \mid \det g \in F^\times E^{\times n}\} = \GL_n(F)\SL_n(E)E^\times ,\]
will play an important role in our  analysis as we  consider the restriction of an irreducible 
 representation $\widetilde{\pi}$ of $\GL_n(E)$ 
to $\SL_n(E)$ in two stages. First we restrict $\wt{\pi}$  to $\GL_n(E)^+$ 
and write it as a direct sum of irreducible representations, 
and then we look at the restriction of each of these direct summands to $\SL_n(E)$. This was 
indeed the strategy employed in \cite{ap03}. 
Note the following simple lemma:

\begin{lemma}\label{prop-mrl}
All the irreducible constituents of the restriction of a representation of $\GL_n(E)^+$ to $\SL_n(E)$ 
admit the same number of linearly independent $\SL_n(F)$-invariant functionals.
\end{lemma}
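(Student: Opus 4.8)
The plan is to reduce the statement to the behaviour of $\SL_n(F)$-invariant functionals under conjugation, and then to exploit the explicit factorization $\GL_n(E)^+ = \GL_n(F)\,\SL_n(E)\,E^\times$ recorded in the definition of $\GL_n(E)^+$. Let $\rho$ be an irreducible (admissible) representation of $\GL_n(E)^+$ on a space $V$. Since $\SL_n(E)$ is normal in $\GL_n(E)^+$, Clifford theory shows that $\rho|_{\SL_n(E)}$ is a finite direct sum of irreducible representations forming a single $\GL_n(E)^+$-conjugacy class; that is, any two irreducible constituents $\pi$ and $\pi'$ satisfy $\pi' \cong \pi^{g}$ for some $g \in \GL_n(E)^+$, where $\pi^{g}(x) = \pi(g^{-1} x g)$. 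It therefore suffices to prove that $\dim_\C \Hom_{\SL_n(F)}[\pi^g,\C] = \dim_\C \Hom_{\SL_n(F)}[\pi,\C]$ for every $g \in \GL_n(E)^+$ and every irreducible $\pi$.

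Writing $g = h\, s\, z$ with $h \in \GL_n(F)$, $s \in \SL_n(E)$, and $z \in E^\times$ a scalar matrix, I would handle the three factors separately. The scalar $z$ is central in $\GL_n(E)$, so conjugation by $z$ is trivial on $\SL_n(E)$ and $\pi^z = \pi$. For $h \in \GL_n(F)$, conjugation normalizes $\SL_n(F)$, i.e. $h^{-1}\SL_n(F)\,h = \SL_n(F)$; hence a functional $\ell \in V^*$ is $\SL_n(F)$-invariant for $\pi^h$ if and only if it is $\SL_n(F)$-invariant for $\pi$, so the two spaces of invariant functionals coincide inside $V^*$ and in particular have equal dimension. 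Finally, for $s \in \SL_n(E)$, conjugation is inner, so $\pi(s)$ furnishes an isomorphism of $\SL_n(E)$-representations between $\pi^{s}$ and $\pi$; restricting this isomorphism to the subgroup $\SL_n(F)$ shows that $\pi^{s}$ and $\pi$ are isomorphic as $\SL_n(F)$-modules, and so again carry $\Hom_{\SL_n(F)}$-spaces of equal dimension. Composing these three steps along the factorization of $g$ then gives the claim (equivalently, one observes that the set of $g$ for which conjugation preserves $\dim_\C\Hom_{\SL_n(F)}$ for all $\pi$ is a subgroup, and it contains the three types of generators just treated, hence all of $\GL_n(E)^+$).

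The one point that requires care -- and the reason the argument is confined to $\GL_n(E)^+$ rather than all of $\GL_n(E)$ -- is the $\SL_n(E)$-factor $s$: since $\SL_n(E)$ does not normalize $\SL_n(F)$, conjugation by $s$ genuinely moves the subgroup $\SL_n(F)$, and one cannot argue, as in the $\GL_n(F)$-case, that the invariant functionals form literally the same subspace of $V^*$. The resolution is to use the abstract inner isomorphism $\pi^{s}\cong\pi$ of $\SL_n(E)$-representations and simply restrict it to $\SL_n(F)$, rather than tracking functionals directly; isomorphic $\SL_n(F)$-representations automatically have invariant functionals of equal dimension. It is precisely the decomposition $\GL_n(E)^+ = \GL_n(F)\,\SL_n(E)\,E^\times$ that allows every conjugation to be split into these three controllable pieces, which is also why the corresponding statement fails for $\GL_n(E)$, whose quotient $\GL_n(E)/\GL_n(E)^+$ can act nontrivially on $\Hom_{\SL_n(F)}$-dimensions.
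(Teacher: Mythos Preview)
Your argument is correct and is essentially the same as the paper's: both hinge on the factorization $\GL_n(E)^+=\GL_n(F)\,\SL_n(E)\,E^\times$ together with the fact that $\GL_n(F)$ normalizes $\SL_n(F)$. The paper compresses your three cases into one line by observing that conjugation by $\SL_n(E)E^\times$ already fixes the isomorphism class of each $\SL_n(E)$-constituent, so the constituents are in fact $\GL_n(F)$-conjugates of one another; your separate treatment of the $s$- and $z$-factors is thus a slightly more explicit unpacking of that same point.
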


\begin{proof}
Since $\GL_n(F)\SL_n(E)E^\times = \GL_n(E)^+$, all  the irreducible 
constituents of the restriction of a representation of $\GL_n(E)^+$ to $\SL_n(E)$ 
are conjugates to one another under the inner conjugation action of $\GL_n(F)$ on $\SL_n(F)$, proving the lemma.
\end{proof}

For an irreducible, admissible representation $\wt{\pi}$ of $\GL_n(E)$, define the sets $X_{\wt{\pi}}$, $X^\prime_{\wt{\pi}}$, $Y_{\wt{\pi}}$, $Z_{\wt{\pi}}$ as follows:

\begin{enumerate}
\item $X_{\wt{\pi}} = \{\alpha \in \widehat{F^\times} \mid \widetilde{\pi} \mbox{~is $\alpha$-distinguished}\},$
\item $X^\prime_{\wt{\pi}} = \{\alpha \in \widehat{F^\times} \mid \widetilde{\pi} \mbox{~is $\alpha$-distinguished or $\alpha \cdot \omega_{E/F}$-distinguished}\},$
\item $Z_{\wt{\pi}} = \{\chi \in \widehat{E^\times} \mid \widetilde{\pi} \otimes \chi \cong \widetilde{\pi}\},$
\item $Y_{\wt{\pi}} = \{\chi \in \widehat{E^\times} \mid \widetilde{\pi} \otimes \chi \cong \widetilde{\pi}, \chi|_{_{F^\times}}=1\}$.
\end{enumerate}
Observe that $Z_{\wt{\pi}}, Y_{\wt{\pi}}$ are abelian groups, whereas $X_{\wt{\pi}}, X^\prime_{\wt{\pi}}$ are just sets, 
and that characters of $E^\times$ in $Z_{\wt{\pi}}$ 
when restricted to $F^\times$ act on the sets $X_{\wt{\pi}}, X^\prime_{\wt{\pi}} 
$ by translation, giving rise to a faithful action of $Z_{\wt{\pi}}/Y_{\wt{\pi}}$ on the sets $X_{\wt{\pi}}, X^\prime_{\wt{\pi}}$. Characters in $Z_{\wt{\pi}}$ are said to be self-twists of $\wt{\pi}$.

\begin{proposition}\label{prop-qpi} 
Let $E$ be a quadratic extension of either a finite or a $p$-adic field $F$.
Let $\pi$ be an irreducible admissible representation of $\SL_n(E)$ which is distinguished by $\SL_n(F)$. 
Then 
\begin{enumerate}
\item If $n$ is odd,
\[\dim_{\mathbb C}{\rm Hom}_{\SL_n(F)}[\pi,\C]\leq 1.\]
\item If $n$ is even, 
\[\dim_{\mathbb C}{\rm Hom}_{\SL_n(F)}[\pi,\C]\leq 
{|\{\alpha \in \widehat{F^\times} \mid \widetilde{\pi} \otimes (\alpha \circ {\mathbb N}m) \cong \widetilde{\pi}, \alpha^2=1\}|}
\leq |F^\times/F^{\times 2}|;\]
in particular, for $F$ a finite field, $\dim_{\mathbb C}{\rm Hom}_{\SL_n(F)}[\pi,\C]\leq 1$ if $F$ has characteristic 2, and $\dim_{\mathbb C}{\rm Hom}_{\SL_n(F)}[\pi,\C]\leq 2$ in odd characteristics.
\end{enumerate}
\end{proposition}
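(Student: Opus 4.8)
The plan is to realize $\pi$ inside an irreducible admissible representation $\wt\pi$ of $\GL_n(E)$ and transfer the whole problem to the distinction characters of $\wt\pi$. Since $\pi$ is distinguished and occurs in $\wt\pi|_{\SL_n(E)}$, the representation $\wt\pi$ is $\SL_n(F)$-distinguished, hence $\alpha$-distinguished for some $\alpha\in\widehat{F^\times}$; twisting $\wt\pi$ by a character of $E^\times$ extending $\alpha^{-1}$ (which leaves $\wt\pi|_{\SL_n(E)}$ unchanged) I may assume $\wt\pi$ is distinguished by $\GL_n(F)$, so by Proposition~\ref{prop-flicker} it is conjugate self-dual. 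All quantities below depend only on the weak equivalence class $[\wt\pi]_w$, so this normalization is harmless. The first step is the identity
\[
\dim_\C\Hom_{\SL_n(F)}[\wt\pi,\C]=|X_{\wt\pi}|.
\]
Indeed $\GL_n(F)$ normalizes $\SL_n(F)$ with abelian quotient $F^\times$, so the finite-dimensional space $\Hom_{\SL_n(F)}[\wt\pi,\C]$ is graded by $\widehat{F^\times}$, the $\alpha$-component being $\Hom_{\GL_n(F)}[\wt\pi,\alpha]$; the Gelfand--Kazhdan argument of Proposition~\ref{prop-flicker}, applied to each twisted functional, shows every component is at most a line, and the components that occur are exactly those $\alpha\in X_{\wt\pi}$.

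Next I pass from $\wt\pi$ to $\pi$ by Clifford theory. The restriction $\wt\pi|_{\SL_n(E)}$ is multiplicity free with exactly $|Z_{\wt\pi}|$ constituents, permuted transitively by $\GL_n(E)/\SL_n(E)E^\times\cong E^\times/E^{\times n}$; the stabilizer of $\pi$ is the annihilator of $Z_{\wt\pi}$, so the $\GL_n(F)$-orbit of $\pi$ (equivalently its $\GL_n(E)^+$-orbit) has size $|Z_{\wt\pi}/Y_{\wt\pi}|$, as one checks by restricting the pairing between $E^\times/E^{\times n}$ and $Z_{\wt\pi}$ to $F^\times$. Decomposing $\Hom_{\SL_n(F)}[\wt\pi,\C]$ as a $\GL_n(F)$-module, each orbit $O$ contributes the module induced from the stabilizer acting on $\Hom_{\SL_n(F)}[\pi,\C]$, and by Lemma~\ref{prop-mrl} all constituents in a single orbit carry the same number $d_O$ of functionals, with $d:=\dim_\C\Hom_{\SL_n(F)}[\pi,\C]=d_{O_\pi}$. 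Comparing dimensions gives $|X_{\wt\pi}|=|Z_{\wt\pi}/Y_{\wt\pi}|\sum_{O}d_O$, whence
\[
\dim_\C\Hom_{\SL_n(F)}[\pi,\C]\le\frac{|X_{\wt\pi}|}{|Z_{\wt\pi}/Y_{\wt\pi}|}=\#\bigl(X_{\wt\pi}/(Z_{\wt\pi}/Y_{\wt\pi})\bigr),
\]
the quotient being by the free translation action of $Z_{\wt\pi}|_{F^\times}\cong Z_{\wt\pi}/Y_{\wt\pi}$ on $X_{\wt\pi}$.

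It remains to bound this orbit count, and here Theorem~\ref{conj-fr} enters. Writing $G_0=\{\alpha\in\widehat{F^\times}\mid\wt\pi\otimes(\alpha\circ\Nm)\cong\wt\pi\}$, comparing the conjugate self-duality relations for two distinguishing twists of $\wt\pi$ shows $X_{\wt\pi}\subseteq G_0$, while $Z_{\wt\pi}|_{F^\times}\subseteq G_0$ because $Z_{\wt\pi}$ is stable under $\sigma$ (using $\wt\pi^\sigma\cong\wt\pi^\vee$). The key elementary point is that $G_0/Z_{\wt\pi}|_{F^\times}$ has exponent $2$: for $\alpha\in G_0$ the character $\alpha\circ\Nm$ lies in $Z_{\wt\pi}$ and restricts to $\alpha^2$ on $F^\times$, so $\alpha^2\in Z_{\wt\pi}|_{F^\times}$. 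Consequently
\[
\#\bigl(X_{\wt\pi}/(Z_{\wt\pi}/Y_{\wt\pi})\bigr)\le[G_0:Z_{\wt\pi}|_{F^\times}]\le|G_0[2]|=\bigl|\{\alpha\in\widehat{F^\times}\mid\alpha^2=1,\ \wt\pi\otimes(\alpha\circ\Nm)\cong\wt\pi\}\bigr|\le|F^\times/F^{\times2}|,
\]
which is exactly part~(2); the finite-field statement follows since $F^\times$ is cyclic, so $|F^\times/F^{\times2}|$ equals $1$ in characteristic $2$ and $2$ in odd characteristic.

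Finally, for $n$ odd a conjugate self-dual twist of $\wt\pi$ is automatically conjugate orthogonal (odd dimension admits no symplectic form), so here $X_{\wt\pi}=G_0$; moreover $Z_{\wt\pi}\subseteq\widehat{E^\times}[n]$ has odd order, which forces $G_0[2]=\{1,\omega_{E/F}\}$. Over finite fields $\omega_{E/F}=1$, so the orbit count is $1$ and part~(1) is immediate. \textbf{The main obstacle is the $p$-adic odd case}: the counting above yields only the bound $2$, arising from the two distinguishing characters $1$ and $\omega_{E/F}$ of $\wt\pi$, and to reach multiplicity one I must show that the two corresponding $\SL_n(F)$-functionals restrict nontrivially to constituents of $\wt\pi|_{\SL_n(E)}$ lying in \emph{different} $\GL_n(F)$-orbits (so that $\sum_O d_O=2$ splits as $1+1$). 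This refinement is precisely where the finer input is required --- either the multiplicity-one theorem of \cite{ana05}, or the characterization that a distinguished generic $\pi$ admits a Whittaker model for a non-degenerate character of $N(E)/N(F)$ from \cite{am16} --- and I expect it to be the crux of the argument.
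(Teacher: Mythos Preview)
Your treatment of part~(2) is correct and follows the same strategy as the paper: compute $\dim_\C\Hom_{\SL_n(F)}[\wt\pi,\C]=|X_{\wt\pi}|$, use Clifford theory through $\GL_n(E)^+$ to get the inequality $|Z_{\wt\pi}/Y_{\wt\pi}|\cdot\dim_\C\Hom_{\SL_n(F)}[\pi,\C]\le |X_{\wt\pi}|$, and then bound the orbit count by $|G_0[2]|$. The paper phrases the last step as bounding the fibers of the set map ${\mathbb N}m:X_{\wt\pi}\to Z_{\wt\pi}/Y_{\wt\pi}$ by its ``kernel'' $\{\alpha:\alpha\circ\Nm\in Y_{\wt\pi}\}=G_0[2]$, which is exactly your bound.

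For part~(1), however, there is a genuine gap, and it is not the deep one you flag. You overlook the elementary \emph{central-character} constraint that the paper uses to finish the odd case with no external input. If $\chi,\chi\alpha\in X_{\wt\pi}$, then $\wt\pi\otimes\wt\chi^{-1}$ and $\wt\pi\otimes\wt\chi^{-1}\wt\alpha^{-1}$ are both $\GL_n(F)$-distinguished, hence by Proposition~\ref{prop-flicker} both have central character trivial on $F^\times$; comparing, one gets $\alpha^n=1$. Now if $\chi$ and $\chi\alpha$ lie in the same fiber of ${\mathbb N}m:X_{\wt\pi}\to Z_{\wt\pi}/Y_{\wt\pi}$, then $\alpha\circ\Nm\in Y_{\wt\pi}$, so $\alpha^2=(\alpha\circ\Nm)|_{F^\times}=1$ as you observed. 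For $n$ odd, $\alpha^2=1$ and $\alpha^n=1$ force $\alpha=1$, so ${\mathbb N}m$ is \emph{injective}, whence $|X_{\wt\pi}|\le|Z_{\wt\pi}/Y_{\wt\pi}|$ and $\dim_\C\Hom_{\SL_n(F)}[\pi,\C]\le 1$. No appeal to \cite{ana05} or \cite{am16} is needed for this proposition; those results enter only later, for the exact multiplicity formula in Proposition~\ref{prop-qpj}.

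A secondary error: your assertion that for $n$ odd ``a conjugate self-dual twist of $\wt\pi$ is automatically conjugate orthogonal (odd dimension admits no symplectic form)'' is false --- conjugate-symplectic is not the same as symplectic, and conjugate-symplectic parameters exist in every dimension (e.g.\ a character of $E^\times$ whose restriction to $F^\times$ is $\omega_{E/F}$). In particular $\omega_{E/F}\in G_0$ since $\omega_{E/F}\circ\Nm=1$, but $\omega_{E/F}\notin X_{\wt\pi}$ precisely by the central-character obstruction above, so your claimed equality $X_{\wt\pi}=G_0$ fails. This side claim is not needed once you use the central-character argument.
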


\begin{proof} 
Fix $\wt{\pi}$ to be an irreducible admissible representation of $\GL_n(E)$ distinguished by $\GL_n(F)$ with $\wt{\pi}$ 
containing $\pi$ upon restriction to $\SL_n(E)$, and 
consider the vector space 
\[V={\rm Hom}_{\SL_n(F)}[\widetilde{\pi},\C]\]
of $\SL_n(F)$-invariant linear functionals on $\widetilde{\pi}$. The group $\GL_n(F)$ operates on $V$ via
\[(g \cdot \lambda)(v)=\lambda(g^{-1} \cdot v),\]
for $v \in \widetilde{\pi}$. Observe that $\SL_n(F)$ acts trivially on $V$ by the definition of $V$, and $F^\times < \GL_n(F)$ acts trivially on $V$ by our assumption on the central character of $\widetilde{\pi}$. Since,
\[\GL_n(F)/F^\times \SL_n(F) \cong F^\times/F^{\times n},\]
a finite abelian group (if $F$ is $p$-adic, assume characteristic of  $F$ does not divide $n$), 
it follows that $V$ is a direct sum of characters of $F^\times$. If
\[V = \bigoplus_{\alpha \in \widehat{F^\times}} m_\alpha \alpha,\]
then $\widetilde{\pi}$ is $\alpha$-distinguished with respect to $\GL_n(F)$ for any $\alpha$ with $m_\alpha \neq 0$. 
Notice also that $m_\alpha \leq 1$ for each $\alpha \in \widehat{F^\times}$, since 
\[\dim_{\mathbb C}{\rm Hom}_{\GL_n(F)}[\widetilde{\pi},\alpha] \leq 1,\]
by the first part of Proposition \ref{prop-flicker}. Therefore,
\[\dim_{\mathbb C}{\rm Hom}_{\SL_n(F)}[\widetilde{\pi},\C]= \left| 
X_{\wt{\pi}} \right|.\]

Note that if $\wt{\pi}$ is $\alpha$-distinguished 
for a character $\alpha: F^\times \rightarrow {\mathbb C}^\times$, then if $\wt{\alpha}$ denotes any extension
of $\alpha$ to $E^\times$, by Proposition \ref{prop-flicker} we must have,
$$(\wt{\pi} \otimes \wt{\alpha})^\sigma \cong (\wt{\pi} \otimes \wt{\alpha})^\vee.$$
This combined with the isomorphism $\wt{\pi}^\sigma \cong \wt{\pi}^\vee$ (because $\wt{\pi}$ is $\GL_n(F)$-distinguished), 
implies that: 
$$\wt{\pi} \otimes (\wt{\alpha}^\sigma \cdot \wt{\alpha}) \cong \wt{\pi},$$
or $\alpha \circ {\mathbb N}m \in Z_{\wt{\pi}}$.

Sending a character $\alpha$ of $F^\times$ to the character $\alpha \circ {\mathbb N}m $ of $E^\times$, 
defines a homomorphism, call it ${\mathbb N}m$ from $\widehat{F^\times}$ to $\widehat{E^\times}$, whose restriction to
$X_{\wt{\pi}}$ will also be denoted by the same symbol ${\mathbb N}m$,
$${\mathbb N}m: X_{\wt{\pi}} 
\longrightarrow Z_{\wt{\pi}}/Y_{\wt{\pi}}.$$
Note that $X_{\wt{\pi}} $ 
being only a set, the map ${\mathbb N}m$ on it is only a set theoretic map, but being the restriction of a group homomorphism, the fibers of this map are contained in translates of any particular element in the fiber
by `the kernel of the map' which consists of those characters $\alpha$ of $F^\times$  for which $ \alpha \circ {\mathbb N}m \in Y_{\wt{\pi}}$, 
i.e., $\pi \otimes (\alpha \circ \Nm) \cong \pi$ 
and $ \alpha \circ {\mathbb N}m|_{F^\times} = \alpha^2 =1$. By central character considerations, we already know that 
if $\chi$ and $\chi\cdot \alpha$ both belong to $X_{\wt{\pi}} $, then 
$\alpha^n=1$. Therefore if $n$ is odd,  
the map of sets ${\mathbb N}m: X_{\wt{\pi}} \longrightarrow Z_{\wt{\pi}}/Y_{\wt{\pi}},$ is injective, and if $n$ is even, 
any fiber of this map has order at most the number of characters $\alpha$ of $F^\times$ with $\pi \otimes (\alpha \circ \Nm) \cong \pi$ and $\alpha^2=1$.

It is clear that an irreducible representation of $\GL_n(F)\SL_n(E)E^\times = \GL_n(E)^+$ when restricted to $\SL_n(E)$ has $|Z_{\wt{\pi}}/Y_{\wt{\pi}}|$ 
many irreducible
components, and since $\GL_n(F)$ acts transitively on these irreducible representations of $\SL_n(E)$, 
the number of $\SL_n(F)$-invariant linear forms on $\wt{\pi}$ contributed by that irreducible representation 
of $\GL_n(E)^+$ which contains $\pi$
equals  $|Z_{\wt{\pi}}/Y_{\wt{\pi}}| \cdot \dim_{\mathbb C}{\rm Hom}_{\SL_n(F)}[\pi,\C]$.
On the other hand, the space of $\SL_n(F)$-invariant linear forms on $\wt{\pi}$ has dimension equal to $|X_{\wt{\pi}}|$. 
Thus, we get the obvious inequality:
$$|Z_{\wt{\pi}}/Y_{\wt{\pi}}| \cdot \dim_{\mathbb C}{\rm Hom}_{\SL_n(F)}[\pi,\C] \leq |X_{\wt{\pi}}|.$$

Now, the properties of the mapping ${\mathbb N}m: X_{\wt{\pi}} \longrightarrow Z_{\wt{\pi}}/Y_{\wt{\pi}}$ 
discussed earlier proves parts $(1)$ and   $(2)$ of the proposition.  
\end{proof}

\begin{remark} 
Multiplicity one property for $n$ odd was already proved in \cite{ana05} by a similar method as above.
It is not clear to the authors if this multiplicity one property is a consequence of `Gelfand's trick'.
\end{remark}
 
The following proposition refines the earlier proposition when $X_{\wt{\pi}}$ is known to be a group, for example, when $F$ is a finite field, or when $F$ is a $p$-adic field, and $\wt{\pi}$ is a discrete series representation.

\begin{proposition}\label{prop-qpii} 
Let $E/F$ be a quadratic extension of either finite or $p$-adic fields. 
Let $\pi$ be an irreducible admissible discrete series representation of $\SL_n(E)$ if $F$ is $p$-adic, and any 
irreducible representation if $F$ is finite field. Assume $\pi$  is distinguished by $\SL_n(F)$ 
and is contained in an irreducible representation $\wt{\pi}$ of $\GL_n(E)$ distinguished by $\GL_n(F)$. 
Let ${\mathbb N}m: X^\prime_{\wt{\pi}} \longrightarrow Z_{\wt{\pi}}/Y_{\wt{\pi}}$ be the norm map defined earlier. 
Let $c(F)=2$ if $F$ is a $p$-adic field, and 
$c(F)=1$ if $F$ is a finite field.
Then for $n$ an even integer, 
\begin{eqnarray*} 
c(F)\dim_{\mathbb C}{\rm Hom}_{\SL_n(F)}[\pi,\C] & = & \frac{|X^\prime_{\tilde{\pi}}|}{|Z_{\tilde{\pi}}/Y_{\tilde{\pi}}|}  
\\ &=& 
 \frac{| Ker ~~{\mathbb N}m|}{|Coker ~~{\mathbb N}m|} \\ 
& = &  \frac{|\{\chi \in \widehat{F^\times} \mid \widetilde{\pi} \otimes (\chi\circ {\mathbb N}m) \cong \widetilde{\pi}, \chi^2=1\}|}
{|\{\lambda|_{F^\times}{\rm ~~such ~~that~~}  \lambda \mbox{{\rm ~is a self-twist of~}} \tilde{\pi} \} / 2 X^\prime_{\tilde{\pi}} |}. \end{eqnarray*}
\end{proposition}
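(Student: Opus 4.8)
The plan is to upgrade the inequality of Proposition \ref{prop-qpi} to an equality, and then obtain the remaining two expressions by pure group theory. For the \emph{main equality}, recall from the proof of Proposition \ref{prop-qpi} that $\dim_\C\Hom_{\SL_n(F)}[\wt\pi,\C]=|X_{\wt\pi}|$. Since the restriction $\wt\pi|_{\SL_n(E)}=\bigoplus_j\pi_j$ is multiplicity free, restricting functionals through $\SL_n(F)\subseteq\SL_n(E)$ gives $\Hom_{\SL_n(F)}[\wt\pi,\C]=\bigoplus_j\Hom_{\SL_n(F)}[\pi_j,\C]$, so that $|X_{\wt\pi}|=\sum_j\dim_\C\Hom_{\SL_n(F)}[\pi_j,\C]$. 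At this point I would invoke the input recalled in the introduction (from \cite{am16}, valid for discrete series when $F$ is $p$-adic and for all irreducibles when $F$ is finite): the constituents $\pi_j$ distinguished by $\SL_n(F)$ are exactly the $\GL_n(F)$-conjugates of $\pi$. Because $\GL_n(F)\subseteq\GL_n(E)^+$, all these conjugates lie in the single irreducible $\GL_n(E)^+$-summand of $\wt\pi$ containing $\pi$, and by Lemma \ref{prop-mrl} the group $\GL_n(F)$ permutes the $|Z_{\wt\pi}/Y_{\wt\pi}|$ constituents of that summand transitively and with equal numbers of $\SL_n(F)$-invariant functionals. Hence
\[
|X_{\wt\pi}|=|Z_{\wt\pi}/Y_{\wt\pi}|\cdot\dim_\C\Hom_{\SL_n(F)}[\pi,\C],
\]
which is precisely the inequality of Proposition \ref{prop-qpi} now forced to be an equality, no other summand contributing.

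Next I would pass from $X_{\wt\pi}$ to $X^\prime_{\wt\pi}$ by showing $|X^\prime_{\wt\pi}|=c(F)\,|X_{\wt\pi}|$. When $F$ is finite, $\omega_{E/F}=1$, so $X^\prime_{\wt\pi}=X_{\wt\pi}$ and $c(F)=1$. When $F$ is $p$-adic, $X^\prime_{\wt\pi}=X_{\wt\pi}\cup\omega_{E/F}X_{\wt\pi}$, and these two cosets of the group $X_{\wt\pi}$ are disjoint exactly when $\omega_{E/F}\notin X_{\wt\pi}$; this holds because $\wt\pi$ is distinguished, hence conjugate orthogonal by Theorem \ref{conj-fr}, hence (being irreducible and conjugate self-dual) not conjugate symplectic by the Gan--Gross--Prasad dichotomy, hence not $\omega_{E/F}$-distinguished. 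Thus $|X^\prime_{\wt\pi}|=2|X_{\wt\pi}|=c(F)|X_{\wt\pi}|$, and combining with the previous display yields the first claimed equality $c(F)\dim_\C\Hom_{\SL_n(F)}[\pi,\C]=|X^\prime_{\wt\pi}|/|Z_{\wt\pi}/Y_{\wt\pi}|$.

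For the two remaining expressions, note first that $X^\prime_{\wt\pi}$ is now a finite abelian group and, since $\omega_{E/F}\circ\Nm=1$, the relation $\alpha\circ\Nm\in Z_{\wt\pi}$ already established in Proposition \ref{prop-qpi} holds for all $\alpha\in X^\prime_{\wt\pi}$; thus $\Nm\colon X^\prime_{\wt\pi}\to Z_{\wt\pi}/Y_{\wt\pi}$ is a genuine homomorphism and $|X^\prime_{\wt\pi}|/|Z_{\wt\pi}/Y_{\wt\pi}|=|\mathrm{Ker}\,\Nm|/|\mathrm{Coker}\,\Nm|$, from $|X^\prime_{\wt\pi}|=|\mathrm{Ker}\,\Nm|\cdot|\mathrm{Im}\,\Nm|$ and $|\mathrm{Coker}\,\Nm|=|Z_{\wt\pi}/Y_{\wt\pi}|/|\mathrm{Im}\,\Nm|$. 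For the cokernel, restriction to $F^\times$ identifies $Z_{\wt\pi}/Y_{\wt\pi}$ with $\{\lambda|_{F^\times}\mid\lambda\in Z_{\wt\pi}\}$ and carries $\mathrm{Im}\,\Nm$ onto $\{\alpha^2\mid\alpha\in X^\prime_{\wt\pi}\}=2X^\prime_{\wt\pi}$, giving the stated denominator. For the kernel, $\alpha\in\mathrm{Ker}\,\Nm$ means $\alpha\circ\Nm\in Y_{\wt\pi}$, i.e. $\wt\pi\otimes(\alpha\circ\Nm)\cong\wt\pi$ and $\alpha^2=1$; conversely any $\chi\in\widehat{F^\times}$ with these two properties lies in $X^\prime_{\wt\pi}$, because then $\wt\pi\otimes\wt\chi^{-1}$ (for any extension $\wt\chi$ of $\chi$) is conjugate self-dual, hence conjugate orthogonal or conjugate symplectic, hence distinguished or $\omega_{E/F}$-distinguished, which after twisting back by $\wt\chi$ says $\wt\pi$ is $\chi$- or $\chi\omega_{E/F}$-distinguished, i.e. $\chi\in X^\prime_{\wt\pi}$. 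This identifies the numerator and completes the chain.

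The step I expect to be the genuine obstacle is the first one: converting the inequality of Proposition \ref{prop-qpi} into an equality rests entirely on knowing that the $\SL_n(F)$-distinguished constituents of $\wt\pi|_{\SL_n(E)}$ form a single $\GL_n(F)$-orbit. This is the substantive content borrowed from \cite{am16}, and it is exactly where the discrete-series hypothesis is needed in the $p$-adic case. The type analysis entering the numerator of Step 3 is a secondary subtlety, but it is disposed of cleanly by the conjugate orthogonal versus conjugate symplectic dichotomy.
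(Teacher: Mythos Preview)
Your proposal is correct and follows essentially the same route as the paper's proof: both use input (2) from \S\ref{sec-whittaker} (that only the $\GL_n(E)^+$-constituent carrying a Whittaker model for $N(E)/N(F)$ is $\SL_n(F)$-distinguished) to upgrade the inequality of Proposition~\ref{prop-qpi} to an equality, then use the discrete-series dichotomy (distinguished versus $\omega_{E/F}$-distinguished, exactly one) to make $X'_{\wt\pi}$ a group and ${\mathbb N}m$ a homomorphism, and finally unwind the kernel and cokernel via the injection $j:Z_{\wt\pi}/Y_{\wt\pi}\hookrightarrow X'_{\wt\pi}$, $\lambda\mapsto\lambda|_{F^\times}$, together with $j\circ{\mathbb N}m=[2]$.

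Two small remarks. First, your phrase ``cosets of the group $X_{\wt\pi}$'' is slightly premature: you have not shown $X_{\wt\pi}$ is a group at that point, only $X'_{\wt\pi}$. What you actually need (and what your dichotomy argument gives) is that $X_{\wt\pi}\cap\omega_{E/F}X_{\wt\pi}=\emptyset$, which suffices for $|X'_{\wt\pi}|=2|X_{\wt\pi}|$ regardless of whether $X_{\wt\pi}$ is a subgroup. Second, your attribution of the finite-field case of input (2) to \cite{am16} is not quite what the paper supports; \S\ref{sec-whittaker} establishes this for cuspidal representations over finite fields (and for $\SL_2$), and refers to \cite{am16} only in the $p$-adic generic case. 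This is an ambiguity already present in the paper's own proof, not a defect specific to your argument.
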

\begin{proof}The proof of this proposition follows  the same strategy which was used in the proof of the 
previous proposition by using the following additional inputs:

\begin{enumerate}
\item A discrete series representation $\wt{\pi}$ 
of $\GL_n(E)$ is $\omega_{E/F}$-distinguished or distinguished if and only if 
$$\wt{\pi}^\sigma \cong \wt{\pi}^\vee;$$
furthermore, such a representation $\wt{\pi}$ 
of $\GL_n(E)$ is either distinguished or $\omega_{E/F}$-distinguished, with exactly one possibility. This follows from \cite[Theorem 7]{kab04} and \cite[Corollary 1.6]{akt04} if $F$ 
is a $p$-adic field, and a consequence of Proposition \ref{Gow} if $F$ is finite. 
This implies in particular that $X^\prime_{\wt{\pi}}$ is a group 
 and the map  
$${\mathbb N}m: X^\prime_{\wt{\pi}} \longrightarrow Z_{\wt{\pi}}/Y_{\wt{\pi}},$$
is now a group homomorphism, whose kernel is  
\[\{\chi \in \widehat{F^\times} \mid \widetilde{\pi} \otimes (\chi \circ \Nm) \cong \widetilde{\pi}, \chi^2=1\}.\]

\item The restriction of $\wt{\pi}$ to $\GL_n(E)^+$ has exactly one irreducible representation - the one which carries 
Whittaker functional for a character of $N(E)/N(F)$ - which is distinguished 
by $\SL_n(F)$; this is the content of the next section.
\end{enumerate}
First two equalities in the statement of the proposition follows from these. For the last equality in the statement
of the proposition, observe that
\begin{enumerate}[label=(\alph*)]
\item The natural map $j:  Z_{\wt{\pi}}/Y_{\wt{\pi}} \rightarrow X_{\wt{\pi}}$ is injective, and
\item the composition of the maps: $
 X^\prime_{\wt{\pi}}\stackrel{{\mathbb N}m} \longrightarrow Z_{\wt{\pi}}/Y_{\wt{\pi}} \stackrel{j}\rightarrow X^\prime_{\wt{\pi}}$
is multiplication by $2$.
\end{enumerate}
\end{proof}

\begin{corollary}
If $n$ is even,  $F$ a finite field, and the representation $\wt{\pi}$ of $\GL_n(E)$ is distinguished by $\GL_n(F)$, 
then if $\wt{\pi}$ does not have a self-twist by the unique character of $E^\times$ 
order 2 (such a character of $E^\times$ comes from $F^\times$ through the norm map), then \[\dim_{\mathbb C}{\rm Hom}_{\SL_n(F)}[\pi,\C]\leq 1.\] 
If $\wt{\pi}$ has a self-twist by the unique character of $E^\times$ of order 2, and also by a character $\chi$ with $\chi(-1)=-1$, then also \[\dim_{\mathbb C}{\rm Hom}_{\SL_n(F)}[\pi,\C]\leq 1.\] 
\end{corollary}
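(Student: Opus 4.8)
The plan is to deduce both claims directly from Propositions \ref{prop-qpi} and \ref{prop-qpii}, the only genuinely new input being a sign computation at $-1$. I first dispose of characteristic $2$: there $|F^\times/F^{\times 2}|=1$, so Proposition \ref{prop-qpi}(2) already gives $\dim_\C\Hom_{\SL_n(F)}[\pi,\C]\le 1$ and both statements hold. So assume $\mathrm{char}\,F$ is odd, and that $\pi$ is distinguished by $\SL_n(F)$ (otherwise the left-hand side is $0$). Let $\eta$ be the quadratic character of $F^\times$ and $\mu$ the unique order-$2$ character of $E^\times$. Since $\Nm\colon E^\times\to F^\times$ is onto, $\eta\circ\Nm$ has order exactly $2$; as $E^\times$ is cyclic it carries a unique order-$2$ character, so $\mu=\eta\circ\Nm$, which is the parenthetical remark in the statement.

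Next I would record from Proposition \ref{prop-qpii} that over a finite field $X'_{\wt\pi}$ is a group, the map $\Nm\colon X'_{\wt\pi}\to Z_{\wt\pi}/Y_{\wt\pi}$ is a homomorphism with kernel $K=\{\chi\in\widehat{F^\times}\mid \wt\pi\otimes(\chi\circ\Nm)\cong\wt\pi,\ \chi^2=1\}$, and
\[\dim_\C\Hom_{\SL_n(F)}[\pi,\C]=\frac{|K|}{|\mathrm{Coker}\,\Nm|}.\]
Since $\chi^2=1$ forces $\chi\in\{1,\eta\}$, the kernel $K$ has order $1$ or $2$, and $\eta\in K$ precisely when $\mu=\eta\circ\Nm$ is a self-twist of $\wt\pi$. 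For the first assertion this is immediate: if $\wt\pi$ has no self-twist by $\mu$ then $K=\{1\}$, so $\dim_\C\Hom_{\SL_n(F)}[\pi,\C]=1/|\mathrm{Coker}\,\Nm|\le 1$ (equivalently, this is the bound $\dim\le|K|$ of Proposition \ref{prop-qpi}(2)).

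For the second assertion I now have $|K|=2$, so it suffices to show $|\mathrm{Coker}\,\Nm|\ge 2$. The key step is to make $\Nm$ explicit, as in the last line of Proposition \ref{prop-qpii}: restriction to $F^\times$ identifies $Z_{\wt\pi}/Y_{\wt\pi}$ with $\{\lambda|_{F^\times}\mid\lambda\in Z_{\wt\pi}\}$ (the kernel of restriction being exactly $Y_{\wt\pi}$), and under this identification $\Nm$ becomes $\alpha\mapsto(\alpha\circ\Nm)|_{F^\times}=\alpha^2$, since $\Nm(x)=x^2$ for $x\in F^\times$. Thus $\mathrm{Coker}\,\Nm=\{\lambda|_{F^\times}\mid\lambda\in Z_{\wt\pi}\}/2X'_{\wt\pi}$. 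The hypothesis supplies a self-twist $\chi\in Z_{\wt\pi}$ with $\chi(-1)=-1$; then $\chi|_{F^\times}$ lies in the numerator, while every element of the image $2X'_{\wt\pi}$ is a square character and hence takes the value $1$ at $-1\in F^\times$. As $\chi|_{F^\times}(-1)=\chi(-1)=-1\ne 1$, the class of $\chi|_{F^\times}$ is nontrivial in the cokernel, so $|\mathrm{Coker}\,\Nm|\ge 2$ and therefore $\dim_\C\Hom_{\SL_n(F)}[\pi,\C]\le 2/2=1$.

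I expect the only delicate point to be the identification of $\Nm$ with the squaring map $\alpha\mapsto\alpha^2$ on characters of $F^\times$; granting that, the condition $\chi(-1)=-1$ obstructs $\chi|_{F^\times}$ from being a square and forces the cokernel to be nontrivial, and the remainder is bookkeeping with the finite abelian groups already set up in Propositions \ref{prop-qpi} and \ref{prop-qpii}.
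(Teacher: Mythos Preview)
Your argument is correct and follows essentially the same route as the paper's proof: both hinge on the observation that under the identification $Z_{\wt{\pi}}/Y_{\wt{\pi}}\hookrightarrow\widehat{F^\times}$ the map $\Nm$ becomes $\alpha\mapsto\alpha^2$, so its image lands in characters trivial at $-1$, and a self-twist $\chi$ with $\chi(-1)=-1$ forces the cokernel to be nontrivial. Your write-up is simply more explicit (separating the characteristic~$2$ case, handling the first assertion via $|K|=1$, and spelling out the squaring identification), whereas the paper compresses all of this into the single remark that the image of $\Nm$ consists of characters with value $1$ at $-1$.
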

\begin{proof}
Observe that the image of the map ${\mathbb N}m: X_{\wt{\pi}} \longrightarrow Z_{\wt{\pi}}/Y_{\wt{\pi}},$ 
consists of those characters on 
$F^\times$ whose value on $-1 \in F^\times$ is 1. Therefore, if there is a self-twist of $\pi$ by a character $\chi$ of $E^\times$ with $\chi(-1) \not = 1$, then the map  ${\mathbb N}m: X_{\wt{\pi}} \longrightarrow Z_{\wt{\pi}}/Y_{\wt{\pi}},$ could not be surjective. This allows one to prove the corollary.
\end{proof}

\begin{remark} 
Proposition \ref{prop-qpii}
allows us to calculate $\dim_{\mathbb C}{\rm Hom}_{\SL_n(F)}[\pi,\C]$ 
(which we already know is $\leq 2$) in all cases for $F$ a 
finite field even if in the above corollary, we have not handled all cases. We just want to add the observation  that
$\dim_{\mathbb C}{\rm Hom}_{\SL_n(F)}[\pi,\C]$ for $\pi$ an irreducible representation of $\SL_n(E)$ 
as well as $\dim_{\mathbb C}{\rm Hom}_{\GL_n(F)}[\wt{\pi},\C]$  for $\wt{\pi}$ an irreducible representation of $\GL_n(E)$, depends only on the
{\it semisimple part of the Jordan decomposition } of $\pi, \wt{\pi}$ (in the sense of Lusztig). 
\end{remark}

The next proposition follows from the method of proof of Proposition \ref{prop-qpi} (using that a generic distinguished representation
of $\SL_n(E)$ is generic for a character of $N(E)/N(F)$ for which we refer to the next section). 
For $n=2$, this proposition is \cite[Theorem 1.4]{ap03} and 
for a tempered representation $\pi$ for any $n$, this is \cite[Theorem 4.3]{ana05}).

\begin{proposition}\label{prop-qpj}
Let $\pi$ be an irreducible admissible generic representation of $\SL_n(E)$ which is distinguished by $\SL_n(F)$
and contained in an irreducible representation $\wt{\pi}$ of $\GL_n(E)$ distinguished by $\GL_n(F)$. Then,
\[\dim_{\mathbb C}{\rm Hom}_{\SL_n(F)}[\pi,\C]=\frac{|X_{\wt{\pi}}|}{|Z_{\wt{\pi}}|/|Y_{\wt{\pi}}|}.\]
\end{proposition}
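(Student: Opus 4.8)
The plan is to run the same argument as in the proof of Proposition \ref{prop-qpi}, but to upgrade the inequality obtained there to an equality by exploiting genericity. Recall that in that proof one first shows $\dim_\C \Hom_{\SL_n(F)}[\wt{\pi},\C] = |X_{\wt{\pi}}|$, then decomposes the restriction $\wt{\pi}|_{\GL_n(E)^+}$ into irreducible $\GL_n(E)^+$-summands $\tau_1,\dots,\tau_m$, and observes that each $\tau_j|_{\SL_n(E)}$ breaks into exactly $|Z_{\wt{\pi}}/Y_{\wt{\pi}}|$ irreducible constituents, all conjugate under $\GL_n(F)$ by Lemma \ref{prop-mrl} (the number is the same for each $j$ since the $\tau_j$ are permuted transitively by $\GL_n(E)$). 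Since $\SL_n(F)\subset\GL_n(E)^+$ stabilizes each $\tau_j$, the functionals split as $\Hom_{\SL_n(F)}[\wt{\pi},\C]=\bigoplus_{j}\Hom_{\SL_n(F)}[\tau_j,\C]$, and Lemma \ref{prop-mrl} forces $\dim_\C\Hom_{\SL_n(F)}[\tau_j,\C]=|Z_{\wt{\pi}}/Y_{\wt{\pi}}|\cdot d_j$, where $d_j$ is the common value of $\dim_\C\Hom_{\SL_n(F)}[\rho,\C]$ over the $\SL_n(E)$-constituents $\rho$ of $\tau_j$.

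Summing over $j$ gives $|X_{\wt{\pi}}|=|Z_{\wt{\pi}}/Y_{\wt{\pi}}|\sum_j d_j$, so the proposition is equivalent to the assertion that exactly one summand contributes: writing $\tau_{j_0}$ for the one containing the given $\pi$, I must show $d_j=0$ for every $j\neq j_0$. Equivalently, the only irreducible constituents of $\wt{\pi}|_{\SL_n(E)}$ that are $\SL_n(F)$-distinguished are the $\GL_n(F)$-conjugates of $\pi$. Granting this, $|X_{\wt{\pi}}|=|Z_{\wt{\pi}}/Y_{\wt{\pi}}|\cdot d_{j_0}=|Z_{\wt{\pi}}/Y_{\wt{\pi}}|\cdot\dim_\C\Hom_{\SL_n(F)}[\pi,\C]$, which is the claimed identity once one rewrites $|Z_{\wt{\pi}}/Y_{\wt{\pi}}|=|Z_{\wt{\pi}}|/|Y_{\wt{\pi}}|$.

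The crucial input, and the step I expect to be the main obstacle, is precisely this uniqueness statement. First I would invoke the result established in the next section, namely that any irreducible admissible generic representation of $\SL_n(E)$ distinguished by $\SL_n(F)$ must carry a Whittaker model for a non-degenerate character of $N(E)$ trivial on $N(F)$, i.e. a character of $N(E)/N(F)$. Then I would use the standard description of the generic constituents of $\wt{\pi}|_{\SL_n(E)}$: for a fixed non-degenerate character the generic constituent is unique, the non-degenerate characters of $N(E)$ are permuted transitively by the diagonal torus of $\GL_n(E)$, and the $\GL_n(F)$-orbits of generic constituents (equivalently, the summands $\tau_j$) are indexed by the orbits of the relevant torus on those characters. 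The key point to verify is that among the $\tau_j$ there is exactly one whose $\SL_n(E)$-constituents admit a Whittaker functional for a character of $N(E)/N(F)$; every other $\tau_j$ then has no $\SL_n(F)$-distinguished constituent and hence $d_j=0$.

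Assembling these pieces yields the equality. The only genuinely delicate point is controlling which characters of $N(E)$ occur on the various constituents and checking that the locus of non-degenerate characters trivial on $N(F)$ meets a single $\GL_n(F)$-orbit of constituents; once the genericity-with-respect-to-$N(E)/N(F)$ result from the next section is available, this reduces to an orbit count for the torus action, and the summation above then closes the argument. I note finally that for $n=2$ this recovers \cite[Theorem 1.4]{ap03} and for $\pi$ tempered it recovers \cite[Theorem 4.3]{ana05}, so the proof should specialize correctly in those known cases as a consistency check.
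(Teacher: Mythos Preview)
Your proposal is correct and follows essentially the same approach as the paper, which simply says the proposition ``follows from the method of proof of Proposition \ref{prop-qpi} (using that a generic distinguished representation of $\SL_n(E)$ is generic for a character of $N(E)/N(F)$ for which we refer to the next section).'' Your worry about the orbit count is unfounded: fixing a nontrivial $\psi_0:E/F\to\C^\times$, the non-degenerate characters of $N(E)$ trivial on $N(F)$ are exactly those of the form $n\mapsto\psi_0(\sum a_i n_{i,i+1})$ with each $a_i\in F^\times$, and $T(F)$ acts transitively on this set, so all $\SL_n(E)$-constituents generic for such a character are $T(F)$-conjugate and hence lie in a single $\GL_n(E)^+$-summand $\tau_{j_0}$---this is precisely what the paper asserts (without elaboration) as item (2) in the proof of Proposition \ref{prop-qpii}.
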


\begin{remark}\label{rmk-qpi}
That the right hand side of the identity in Proposition \ref{prop-qpj} is indeed a positive integer can be observed independently. Indeed, the group $Z_{\wt{\pi}}/Y_{\wt{\pi}}$ acts freely on $X_{\widetilde{\pi}}$, and hence it is the number of orbits under this action.
\end{remark}

Our next result 
relates distinction for the symmetric space $(\SL_n(E),\SL_n(F))$ to the notion of strong and weak equivalences defined at the end of \S 2 on preliminaries.

\begin{proposition}\label{thm-qpi}
Let $\pi$ be an irreducible admissible generic representation of $\SL_n(E)$ which is distinguished by $\SL_n(F)$. Let $\widetilde{\pi}$ be an irreducible admissible generic representation of $\GL_n(E)$ which contains $\pi$ on restriction to $\SL_n(E)$, 
and is distinguished by $\GL_n(F)$. Then,
\[\dim_{\mathbb C}{\rm Hom}_{\SL_n(F)}[\pi,\C]=q(\widetilde{\pi}),\]
where $q(\widetilde{\pi})$ is the number of strong equivalence classes 
in the weak equivalence class of $\widetilde{\pi}$, i.e., the cardinality of the set $[\widetilde{\pi}]_w/\sim_s$ (inside conjugate orthogonal representations of $\GL_n(E)$).
\end{proposition}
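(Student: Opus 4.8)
The plan is to reduce everything to Proposition \ref{prop-qpj}, which already yields $\dim_\C\Hom_{\SL_n(F)}[\pi,\C]=|X_{\wt{\pi}}|/(|Z_{\wt{\pi}}|/|Y_{\wt{\pi}}|)$. Hence the entire content of the statement is the purely character-theoretic identity $q(\wt{\pi})=|X_{\wt{\pi}}|/(|Z_{\wt{\pi}}|/|Y_{\wt{\pi}}|)$, and no further representation theory of $\SL_n$ is required. The strategy is to realize $q(\wt{\pi})$ as an orbit count and match it with the right-hand side.

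First I would set up the bookkeeping on the weak equivalence class $[\wt{\pi}]_w$. Its members are exactly the twists $\wt{\pi}\otimes\chi$ for $\chi\in\widehat{E^\times}$, with $\wt{\pi}\otimes\chi\cong\wt{\pi}\otimes\chi'$ iff $\chi/\chi'\in Z_{\wt{\pi}}$, so they are parametrized by $\widehat{E^\times}/Z_{\wt{\pi}}$. Writing $T=\{\eta\in\widehat{E^\times}\mid\eta|_{F^\times}=1\}$ for the group of characters of $E^\times/F^\times$, one sees $\wt{\pi}\otimes\chi\sim_s\wt{\pi}\otimes\chi'$ iff $\chi/\chi'\in TZ_{\wt{\pi}}$, so the strong equivalence classes inside $[\wt{\pi}]_w$ are parametrized by $\widehat{E^\times}/TZ_{\wt{\pi}}$.

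The key step is to identify the conjugate orthogonal members of $[\wt{\pi}]_w$. By the final assertion of Theorem \ref{conj-fr}, an irreducible admissible generic representation of $\GL_n(E)$ is conjugate orthogonal precisely when it is distinguished by $\GL_n(F)$; applying this to each (still generic) twist $\wt{\pi}\otimes\chi$ and unwinding $\Hom_{\GL_n(F)}[\wt{\pi}\otimes\chi,\C]=\Hom_{\GL_n(F)}[\wt{\pi},(\chi|_{F^\times})^{-1}]$, I get that $\wt{\pi}\otimes\chi$ is conjugate orthogonal iff $(\chi|_{F^\times})^{-1}\in X_{\wt{\pi}}$. Since $X_{\wt{\pi}}$ is stable under inversion (from $\wt{\pi}^\vee\cong\wt{\pi}^\sigma$ in Proposition \ref{prop-flicker} together with the fact that $\sigma$ fixes $\GL_n(F)$ pointwise), this reads $\chi|_{F^\times}\in X_{\wt{\pi}}$. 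Thus the set $\Lambda$ of conjugate orthogonal members of $[\wt{\pi}]_w$ is $r^{-1}(X_{\wt{\pi}})$, where $r\colon\widehat{E^\times}\twoheadrightarrow\widehat{F^\times}$ is restriction (surjective, with kernel $T$), and $\Lambda$ is saturated under both $T$ and $Z_{\wt{\pi}}$. Consequently $q(\wt{\pi})=|\Lambda/TZ_{\wt{\pi}}|$. Pushing through $r$, which identifies $\widehat{E^\times}/T\cong\widehat{F^\times}$ while carrying $\Lambda/T$ to $X_{\wt{\pi}}$ and $Z_{\wt{\pi}}T/T$ to $r(Z_{\wt{\pi}})\cong Z_{\wt{\pi}}/Y_{\wt{\pi}}$, I obtain that $q(\wt{\pi})$ is the number of orbits of the translation action of $Z_{\wt{\pi}}/Y_{\wt{\pi}}$ on $X_{\wt{\pi}}$. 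This action is free by Remark \ref{rmk-qpi}, so the orbit count equals $|X_{\wt{\pi}}|/(|Z_{\wt{\pi}}|/|Y_{\wt{\pi}}|)$, which by Proposition \ref{prop-qpj} is exactly $\dim_\C\Hom_{\SL_n(F)}[\pi,\C]$.

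I expect the main obstacle to be the middle step: cleanly justifying that the conjugate orthogonal members of the weak class are precisely the distinguished twists, i.e.\ applying Theorem \ref{conj-fr} uniformly across the whole family $\wt{\pi}\otimes\chi$, and then verifying that $\Lambda$ is genuinely $T$- and $Z_{\wt{\pi}}$-saturated and that the faithful $Z_{\wt{\pi}}/Y_{\wt{\pi}}$-action it induces on $X_{\wt{\pi}}$ coincides with the one coming from $r$. The remaining manipulations are elementary group-theoretic bookkeeping in the abelian group $\widehat{E^\times}$, modulo the finiteness of $X_{\wt{\pi}}$, which is guaranteed by Proposition \ref{prop-qpj}.
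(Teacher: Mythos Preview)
Your proposal is correct and follows essentially the same route as the paper's proof. Both arguments reduce, via Proposition \ref{prop-qpj}, to the identity $q(\wt{\pi})=|X_{\wt{\pi}}|/(|Z_{\wt{\pi}}|/|Y_{\wt{\pi}}|)$, invoke Theorem \ref{conj-fr} to identify the conjugate orthogonal members of the weak class with the $\GL_n(F)$-distinguished twists of $\wt{\pi}$, and then verify that the strong equivalence classes correspond bijectively to the $Z_{\wt{\pi}}/Y_{\wt{\pi}}$-orbits on $X_{\wt{\pi}}$; the only cosmetic difference is that the paper builds the surjection $\alpha\mapsto \wt{\pi}\otimes\wt{\alpha}^{-1}$ from $X_{\wt{\pi}}$ onto $[\wt{\pi}]_w/\sim_s$ directly and analyzes its fibers, whereas you first parametrize the weak class inside $\widehat{E^\times}$ and take successive quotients.
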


\begin{proof}
If $\alpha$ is a character of $F^\times$ in $ X_{\widetilde{\pi}}$ 
and if $\wt{\alpha}$ 
is any extension of $\alpha$ to $E^\times$, then by the definition of $ X_{\widetilde{\pi}}$, 
$\wt{\pi}[\wt{\alpha}] 
= \widetilde{\pi} \otimes \widetilde{\alpha}^{-1}$ 
is distinguished by $\GL_n(F)$, hence by Theorem \ref{conj-fr}, 
it is a conjugate orthogonal representation, 
therefore  $\wt{\pi}[\wt{\alpha}] \in [\widetilde{\pi}]_w$; 
different extensions $\wt{\alpha}$ of $\alpha$ give rise 
to elements in a given strong equivalence class, thus 
$\widetilde{\pi} [\wt{\alpha}]$ 
as an element of $[\widetilde{\pi}]_w/\sim_s$ 
depends only on $\alpha$. Since by Theorem \ref{conj-fr}, conjugate orthogonal generic representations are precisely the 
 irreducible admissible generic representations of $\GL_n(E)$ that are distinguished by $\GL_n(F)$, the mapping
$\alpha \rightarrow \widetilde{\pi} [\wt{\alpha}]$ is surjective onto  $[\widetilde{\pi}]_w/\sim_s$.

Under the natural action of $ Z_{\wt{\pi}} /Y_{\wt{\pi}}$ on $X_{\wt{\pi}}$, it is clear that 
$\widetilde{\pi} [\wt{ \beta}\wt{\alpha} ] = \widetilde{\pi} [\wt{\alpha }] $ 
as an element of 
 $[\widetilde{\pi}]_w/\sim_s$ for $\widetilde{\beta} \in  Z_{\wt{\pi}}$.

Conversely, if $\widetilde{\pi} [\wt{\alpha}] = \widetilde{\pi} [\wt{\beta }] $ as an element of $[\widetilde{\pi}]_w/\sim_s$, then  
$\widetilde{\pi} \otimes \widetilde{\beta}^{-1} \cong \widetilde{\pi} \otimes \widetilde{\alpha}^{-1}\chi $ for some 
character $\chi$ of $E^\times/F^\times$. This condition is equivalent to saying that 
$\widetilde{\beta}\widetilde{\alpha}^{-1}\chi \in Z_{\widetilde{\pi}}.$ Therefore $\widetilde{\alpha}$ and $\widetilde{\beta}$ differ by an element of $Z_{\wt{\pi}}$.
\end{proof}

\section{Distinction by $\SL_n(F)$ and Whittaker models}\label{sec-whittaker}

In [AP03] using explicit realization of a $\GL_2(F)$-invariant linear form
in the Kirillov model of a representation $\pi$ of $\GL_2(E)$  due to Jeff Hakim, it was proved 
that any irreducible admissible generic  representation of $\SL_2(E)$ which is distinguished by
$\SL_2(F)$ has a Whittaker model for a character $\psi: E/F \rightarrow \C^\times$. This result was among the most important non-trivial ingredient to our work in [AP03]. Its analogue for $\SL_n(E)$ will be similarly crucial to us in this paper.

In a recent work of the first author with Matringe \cite{am16}, it has been proved that the integral representation 
for the invariant linear form 
\[\ell(W)=\int_{N_n(F)\backslash P_n(F)}W(p)dp\]
can be defined on the Whittaker space $\mathcal W(\widetilde{\pi},\psi)$ (absolutely convergent integral for $\wt{\pi}$ unitary \cite[Lemma 4]{fli88}, and defined by regularization in general \cite[\S 7]{am16}),  associated to an irreducible generic representation 
$\wt{\pi}$ of $\GL_n(E)$, 
and up to multiplication by scalars, is the unique non-zero element in Hom$_{\GL_n(F)}(\widetilde{\pi},1)$, which allows one to conclude
as in [AP03] that any irreducible generic representation of $\SL_n(E)$ which is distinguished by
$\SL_n(F)$ has a Whittaker model for a non-degenerate character $\psi: N(E)/N(F) \rightarrow \C^\times$.

In this section,  we offer a `pure thought' argument based on Clifford theory with the `mirabolic' subgroup of $\GL_n(E)$, 
the subgroup of $\GL_n(E)$ with last row $(0,\cdots, 0, 1)$, first for $\SL_2(E)$, and then for $\SL_n(E)$
in general but only for tempered representations. Our proof for $\SL_2(E)$ works for finite fields, 
but the proof for  $\SL_n(E)$, when $E$ is finite,  works only for cuspidal representations.

\begin{lemma} \label{AP3}
Let $\pi$ be an irreducible generic representation of $\SL_2(E)$. Then if $\pi$ is distinguished by
$\SL_2(F)$,  $\pi$ must have a Whittaker model for a character $\psi: E/F \rightarrow \C^\times$.
\end{lemma}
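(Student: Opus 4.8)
The plan is to read off the Whittaker character of $\pi$ from the distinguishing functional by working inside a Kirillov model adapted to the symmetric space. First I would fix an additive character $\psi_0$ of $E$ which is trivial on $F$ but nontrivial on $E$; such a $\psi_0$ exists because the nonzero $F$-vector space $E/F$ has nontrivial characters. Choose any irreducible admissible generic representation $\widetilde{\pi}$ of $\GL_2(E)$ containing $\pi$ on restriction to $\SL_2(E)$ (twisting $\widetilde{\pi}$ by a character of $E^\times$ leaves $\widetilde{\pi}|_{\SL_2(E)}$, and hence $\pi$, unchanged), and realize $\widetilde{\pi}$ in its $\psi_0$-Kirillov model on functions $\xi : E^\times \to \C$. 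In this model the upper unipotent acts by $(\begin{pmatrix} 1 & b \\ 0 & 1 \end{pmatrix}\xi)(a) = \psi_0(ab)\xi(a)$, the torus element acts by $(\begin{pmatrix} t & 0 \\ 0 & 1 \end{pmatrix}\xi)(a) = \xi(at)$, and the Whittaker functional attached to the character $x \mapsto \psi_0(cx)$ of $N(E)$ is evaluation $\mathrm{ev}_c : \xi \mapsto \xi(c)$. Consequently a component $\pi$ of $\widetilde{\pi}|_{\SL_2(E)}$ is generic for $\psi_0(c\,\cdot)$ precisely when some $\xi \in \pi$ satisfies $\xi(c) \neq 0$, so it suffices to produce such a $c$ in $F^\times$, since $\psi_0(c\,\cdot)$ is then trivial on $F$.

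Next I would bring in the hypothesis. Let $\ell$ be a nonzero element of $\Hom_{\SL_2(F)}[\pi,\C]$; then $\ell$ is in particular invariant under $N(F) = \{\begin{pmatrix} 1 & b \\ 0 & 1 \end{pmatrix} : b \in F\}$. Invariance under $N(F)$ means that multiplying $\xi$ by the function $a \mapsto \psi_0(ab)$ does not change $\ell(\xi)$, for every $b \in F$; reading $\ell$ as a distribution on $E^\times$, this forces its support into the set $A = \{a \in E^\times \mid \psi_0(ab) = 1 \text{ for all } b \in F\}$. The map $a \mapsto [\,f \mapsto \psi_0(af)\,]$ from $E$ to the character group of $F$ is $F$-linear, its kernel is an $F$-subspace containing $F$ and, being a proper subspace (otherwise $\psi_0$ would be trivial on $EF = E$), equal to $F$; hence $A = F^\times$. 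Since $\ell \neq 0$, there is a vector $\xi \in \pi$ with $\xi$ not identically zero on $F^\times$, so $\mathrm{ev}_c(\xi) = \xi(c) \neq 0$ for some $c \in F^\times$, and by the first paragraph $\pi$ is generic for the character $\psi = \psi_0(c\,\cdot)$, which is trivial on $F$. This is the desired Whittaker model.

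The step I expect to be the real obstacle is making sure the support of $\ell$ lies on the nondegenerate characters and not on the trivial character of $N(E)$ (which is also trivial on $F$, and would correspond to the excluded point $a = 0$): $N(F)$-invariance by itself cannot separate a genuine Whittaker contribution from a contribution of the Jacquet module. This is exactly where the mirabolic $P = \{\begin{pmatrix} a & b \\ 0 & 1 \end{pmatrix}\}$ of $\GL_2(E)$ does the work, for $\widetilde{\pi}|_{P}$ is the induced model $\mathrm{ind}_{N(E)}^{P}\psi_0$, whose $N(E)$-spectrum consists only of the nondegenerate characters $\psi_0(c\,\cdot)$ with $c \in E^\times$; the Clifford/Mackey analysis of $\mathrm{ind}_{N(E)}^{P}\psi_0$ restricted to $N(F)$ then localizes the invariant functionals exactly at the points $a \in F^\times$, with no trivial-character term to contend with. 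For $p$-adic $F$ one should additionally check that $\ell$ is carried by a part of $E^\times$ bounded away from $0$ and $\infty$ — this is guaranteed by the convergence of Flicker's integral for $\widetilde{\pi}$ unitary and by regularization in general — while for finite $F$ the distribution is just a function on $E^\times$ and the support computation is elementary linear algebra.
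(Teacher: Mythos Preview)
Your approach is genuinely different from the paper's, and in fact it is precisely the Kirillov-model route of Hakim and \cite{ap03} that the paper explicitly sets out to replace by a ``pure thought'' argument. The paper's proof never touches the Kirillov model: it simply observes that the $N(F)$-coinvariants $\pi_F$ carry an action of $N(E)/N(F)\cong E/F$, and splits into two cases. If $E/F$ acts nontrivially on $\pi_F$, one immediately gets $(\pi_F)_\psi=\pi_\psi\neq 0$ for some nontrivial $\psi$ of $E/F$. If $E/F$ acts trivially on $\pi_F$, then the $\SL_2(F)$-invariant form $\ell$ is automatically $N(E)$-invariant, hence invariant under the group generated by $\SL_2(F)$ and $N(E)$, which is all of $\SL_2(E)$; this forces $\pi$ to be one-dimensional, contradicting genericity. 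This generation fact is the entire engine of the paper's proof, and it sidesteps every analytic issue you raise in your third paragraph.

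Your argument, by contrast, is not self-contained as written. Two points: first, the assertion that $\widetilde{\pi}|_P$ \emph{is} $\mathrm{ind}_{N(E)}^P\psi_0$ is only correct for supercuspidal $\widetilde{\pi}$; in general there is a nonsplit extension by the Jacquet module, and this is exactly the ``$a=0$'' contribution you are trying to exclude. Second, the passage ``reading $\ell$ as a distribution on $E^\times$'' and then deducing that some $\xi\in\pi$ is nonzero on $F^\times$ is not justified: $\ell$ is only defined on the subspace $\pi\subset\widetilde{\pi}$, not on $C_c^\infty(E^\times)$, so the support calculus does not apply directly. You acknowledge both issues and defer to Flicker's integral or the regularization of \cite{am16}, but that is exactly the external input the paper's lemma is designed to avoid. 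In the finite-field case your argument can be made rigorous by honestly averaging over $N(F)$ (since $F$ is finite, $\xi\cdot 1_{F^\times}$ genuinely lies in $\pi$), but in the $p$-adic case the paper's two-line dichotomy is both shorter and cleaner than any repair of the distributional argument.
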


\begin{proof} Since $\pi$ is distinguished by $\SL_2(F)$, the largest quotient of $\pi$ on which $\SL_2(F)$ operates trivially
is non-zero. As a consequence, the largest quotient $\pi_{F}$ of $\pi$ on which $N(F) = F$ operates trivially is non-zero. 
Clearly $\pi_{F}$ is a smooth module for $N(E)/N(F) = E/F$. Thus there are two options:

\begin{enumerate}
\item  $N(E)/N(F)$ does not operate trivially on $\pi_F$, in which case it is easy to prove that
for some non-trivial character $\psi: N(E)/N(F) \rightarrow \C^\times$, $\pi_{\psi} \not = 0$.

\item   $N(E)/N(F)$ operates trivially on $\pi_F$, in which case in particular
$N(E)$ will operate trivially on the linear form $\ell: \pi \rightarrow \C$
which is $\SL_2(F)$-invariant. Thus this linear form will be invariant under
$\SL_2(F)$ as well as $N(E)$, and therefore the group generated by $\SL_2(F)$ and $N(E)$. 
It is easy to see that the group generated by $\SL_2(F)$ and $N(E)$ is $\SL_2(E)$. 
Thus $\ell: \pi \rightarrow \C$ is invariant under $\SL_2(E)$, so $\pi$ must be one dimensional, a contradiction to its being generic.
\end{enumerate}

This completes the proof of the lemma. \end{proof}

\begin{proposition} \label{AP}
Let $\pi$ be an irreducible admissible tempered representation of $\SL_n(E)$. Then if $\pi$ is distinguished by
$\SL_n(F)$, it must have a Whittaker model for a non-degenerate character $\psi: N(E)/N(F) \rightarrow \C^\times$.
\end{proposition}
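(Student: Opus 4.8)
The plan is to mimic the dichotomy of Lemma~\ref{AP3}, but to carry it out one simple root at a time, organised by the chain of mirabolic subgroups. The reason one cannot argue directly as in $\SL_2$ is that for $n\geq 3$ the subgroup $N(F)$ is no longer normal in $N(E)$, so there is no action of ``$N(E)/N(F)$'' to exploit. The device that restores the $\SL_2$ picture is the \emph{abelian} unipotent radical of the mirabolic: writing $P_k(E)=\GL_{k-1}(E)\ltimes U_k(E)$ with $U_k(E)\cong E^{k-1}$ (the last column), the subgroup $U_k(F)$ is normal in the abelian group $U_k(E)$, with quotient $A_k:=U_k(E)/U_k(F)\cong (E/F)^{k-1}$ a genuine group acting on the $U_k(F)$-coinvariants. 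Since $\pi$ is tempered it lies in an irreducible generic $\wt{\pi}$ of $\GL_n(E)$, and I will induct on $n$, the case $\SL_2$ being Lemma~\ref{AP3}.

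For the top step, let $\ell\neq 0$ be an $\SL_n(F)$-invariant functional on $\pi$. It is in particular $U_n(F)$-invariant, hence descends to a nonzero functional on $\pi_{U_n(F)}$, a smooth module for the abelian group $A_n$ on which $\SL_{n-1}(F)$ also acts through its linear action on $U_n$. Two cases arise as in Lemma~\ref{AP3}. If $U_n(E)$ fixes $\ell$, then $\ell$ is invariant under the group generated by $\SL_n(F)$ and $U_n(E)$, which is all of $\SL_n(E)$ (conjugating $U_n(E)$ by the permutation matrices in $\SL_n(F)$ produces every root subgroup over $E$); then $\pi$ is the trivial representation, contradicting temperedness for $n\geq 2$. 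For $\pi$ supercuspidal one argues even more cleanly, since $U_n$ is the unipotent radical of the $(n-1,1)$-parabolic, so $\pi_{U_n(E)}=0$ and this alternative cannot occur at all -- this is the mechanism that survives over finite fields for cuspidal $\pi$. Hence some nontrivial character of $A_n$ occurs, and decomposing the smooth $A_n$-module $\pi_{U_n(F)}$ into isotypic pieces (a genuine direct sum over finite fields, and via exactness and finiteness of the twisted Jacquet functors in the $p$-adic case) produces a nontrivial $\mu_n$ on $U_n(E)$, trivial on $U_n(F)$, with $\Hom_{U_n(E)}[\pi,\mu_n]\neq 0$.

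Next I make $\mu_n$ non-degenerate on the simple root $(n-1,n)$ and set up the descent. Identifying $\widehat{A_n}\cong F^{n-1}$, the group $\SL_{n-1}(F)\subset \SL_n(F)$ acts through its standard representation, which is transitive on nonzero vectors once $n-1\geq 2$; conjugating $\ell$ by a suitable element of $\SL_{n-1}(F)$ -- which preserves $\SL_n(F)$-invariance, the conjugation being inner on $\SL_n(F)$ -- I may assume $\mu_n$ is nontrivial precisely on the coordinate $(n-1,n)$. Its stabiliser in $\GL_{n-1}(E)$ is the next mirabolic $P_{n-1}(E)$, and because the passage to the $\mu_n$-twisted Jacquet module is equivariant for $\mathrm{Stab}_{\SL_n(F)}(\mu_n)$ -- which is the mirabolic of $\SL_{n-1}(F)$ and in particular contains $\SL_{n-2}(F)$ -- the resulting functional on $\pi_{U_n(E),\mu_n}$ is invariant under that stabiliser. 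Thus $\pi_{U_n(E),\mu_n}$, viewed as a representation of $P_{n-1}(E)$ together with this invariant functional, is exactly the input needed to repeat the argument at level $n-1$. Iterating down the mirabolic chain assembles characters $\mu_n,\mu_{n-1},\dots,\mu_2$ on the successive radicals, each nontrivial on its simple root and trivial on the relevant $F$-points, and together they constitute a non-degenerate $\psi$ on $N(E)=N_{n-1}(E)\ltimes U_n(E)$ trivial on $N(F)$ for which $\pi$ is generic.

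The main obstacle is making the dichotomy survive at the \emph{intermediate} stages. Excluding the degenerate alternative is transparent only at the very top (where it forces the trivial representation) and for $\pi$ cuspidal (where all proper Jacquet modules vanish); at an interior level $k<n$ the bad alternative yields instead an $\SL_{k-1}(F)$-distinguished quotient of a lower Bernstein--Zelevinsky derivative, and one must rule out that the invariant functional is entirely absorbed there. This is precisely where temperedness, rather than mere genericity of the ambient $\wt{\pi}$, is indispensable: I expect to invoke Casselman's criterion to force the exponents occurring in the Jacquet modules of a tempered $\pi$ to be sufficiently negative, which should preclude the functional from factoring through a proper, strictly more degenerate quotient and thereby close the induction. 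Establishing this exponent bound -- equivalently, that the distinction functional necessarily tracks the top (generic) derivative all the way down the chain -- is the real work, and is what confines the clean form of the argument to tempered representations over $p$-adic fields and to cuspidal representations over finite fields.
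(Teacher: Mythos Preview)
Your overall strategy---descending the mirabolic chain and applying Clifford theory for the abelian radical $U_k(E)$, with a dichotomy at each stage between a nontrivial character of $U_k(E)/U_k(F)$ and the full Jacquet module---is precisely the paper's approach, and you have correctly located the difficulty at the intermediate stages. But the proof as written is incomplete exactly where you say it is, and the paper's resolution is both different from and simpler than the Casselman-type exponent bound you propose.

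Two points. First, the paper reformulates the statement for $\GL_n^+(E)$ distinguished by $\GL_n(F)$ rather than for $\SL_n(E)$ distinguished by $\SL_n(F)$, and at each intermediate stage tracks $P_k(F)$-distinction of $_k\Delta=\pi^{(n-k)}|_{P_k^+(E)}$. The ``bad'' alternative then becomes the statement that $_k\Delta_{N_k(E)}=\nu^{1/2}\pi^{(n-k+1)}$ is $\GL_{k-1}(F)$-distinguished, where $\pi^{(j)}$ is the $j$-th Bernstein--Zelevinsky derivative of the ambient $\wt\pi$. This reframing matters: in your $\SL$-setup the functional at stage $k$ is only invariant under the mirabolic of $\SL_{k-1}(F)$, and it is not clear how to extract a usable constraint from that.

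Second, once the bad alternative is phrased as $\GL_{k-1}(F)$-distinction, temperedness is used via a direct \emph{central character} computation, not via Casselman's criterion. Writing $\wt\pi=\pi_1\times\cdots\times\pi_r$ with each $\pi_i$ unitary discrete series built from a unitary supercuspidal $\rho_i$, the Leibniz rule together with the explicit form of $\pi_i^{(k)}$ shows that every nonzero proper derivative has central character whose absolute value is a strictly positive power of $\nu$; hence $|\omega(\nu^{1/2}\pi^{(j)})|\neq 1$ for $0<j<n$. But any $\GL_{k-1}(F)$-distinguished $\Lambda$ satisfies $\Lambda^\sigma\cong\Lambda^\vee$ (Proposition~\ref{prop-flicker}), forcing $|\omega_\Lambda|=1$. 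This immediately excludes the degenerate branch at every intermediate stage, and the induction closes. Your top-level shortcut (invariance under $\SL_n(F)$ and $U_n(E)$ forces triviality) is a nice observation, but it does not survive to $k<n$; the central-character obstruction is what carries the argument there.
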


We will prove this proposition in the following equivalent form.

\begin{proposition} \label{AP2}
Let $\pi$ be an irreducible admissible tempered representation of the group $\GL^+_n(E)= E^\times \GL_n(F)\SL_n(E)$. Then if $\pi$ is distinguished by
$\GL_n(F)$, then $\pi$ must have a Whittaker model for a non-degenerate character $\psi: N(E)/N(F) \rightarrow \C^\times$.
\end{proposition}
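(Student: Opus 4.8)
My goal is to establish the nonvanishing of the twisted Jacquet module $\pi_{N(E),\psi}$, equivalently $\Hom_{N(E)}[\pi,\psi]\neq 0$, for some non-degenerate character $\psi$ of $N(E)$ that is trivial on $N(F)$; this is precisely the assertion that $\pi$ admits a Whittaker model of the required type. I would first record what temperedness buys us: since $\pi$ is tempered it is a constituent of the restriction to $\GL^+_n(E)$ of an irreducible tempered — hence generic — representation $\widetilde{\pi}$ of $\GL_n(E)$, so $\pi$ is itself generic (it is $\psi'$-generic for at least one non-degenerate $\psi'$) and, being nontrivial, has no nonzero quotient on which $\SL_n(E)$ acts trivially. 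The real difficulty is that the available $\psi'$ need not be trivial on $N(F)$, and — in contrast with the case $n=2$ treated in Lemma \ref{AP3} — for $n\geq 3$ the subgroup $N(F)$ is \emph{not} normal in $N(E)$, so one cannot pass to $N(E)/N(F)$-coinvariants and run the dichotomy of Lemma \ref{AP3} in a single step.

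The plan is to peel off one simple root at a time along the mirabolic flag, reinstating the situation of Lemma \ref{AP3} at each stage. Let $P_n(E)\subset\GL_n(E)$ be the mirabolic subgroup, with abelian unipotent radical $V_n\cong E^{n-1}$ (the entries of the last column above the diagonal) and Levi factor $\GL_{n-1}(E)$. The observation that rescues the $\SL_2$ argument is that, although $N(F)$ is not normal in $N(E)$, the subgroup $V_n(F)=V_n\cap\GL_n(F)\cong F^{n-1}$ \emph{is} normal in the abelian group $V_n$, so $V_n/V_n(F)\cong(E/F)^{n-1}$ is a genuine abelian group on which a dichotomy can be run. I would fix $0\neq\ell\in\Hom_{\GL_n(F)}[\pi,\C]$; being $V_n(F)$-invariant it factors through the coinvariants $M_0:=\pi_{V_n(F)}$, a smooth $V_n/V_n(F)$-module carrying the residual action of the relevant stabilizers in the Levi, and I would apply the dichotomy of Lemma \ref{AP3} to $M_0$ according to whether $V_n$ acts trivially on $M_0$ or not.

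In the case that $V_n$ acts trivially on $M_0$, the functional $\ell$ is $V_n(E)$-invariant; since conjugating $V_n$ by the permutation matrices in $\GL_n(F)$ produces every root subgroup $U_\beta(E)$, the functional $\ell$ is then invariant under $\langle\GL_n(F),V_n(E)\rangle\supseteq\SL_n(E)$, contradicting the absence of an $\SL_n(E)$-trivial quotient; so this case is excluded exactly as in Lemma \ref{AP3}. In the remaining case some non-trivial character $\theta$ of $V_n/V_n(F)$ occurs in $M_0$, and since $\GL_{n-1}(F)$ acts on $V_n/V_n(F)\cong F^{n-1}$ through the standard representation, hence transitively on non-trivial characters, I would conjugate by an element of $\GL_{n-1}(F)\subset\GL_n(F)$ to take $\theta$ supported on the simple root $\alpha_{n-1}$; its Levi stabilizer is the mirabolic $P_{n-1}(E)$, and $\pi_{V_n,\theta}$ is a $P_{n-1}(E)$-module. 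Iterating down the flag and recording at each stage a character trivial on $V_j(F)$ and non-trivial on $\alpha_j$, after $n-1$ steps one reaches $\GL_1$ and assembles a non-degenerate $\psi$, trivial on $N(F)$ and nondegenerate on every simple root, with $\pi_{N(E),\psi}\neq 0$.

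The hard part will be driving the recursion past its first step. To force the favourable case (rather than a degenerate collapse) at every subsequent stage, one must carry a distinguished functional onto each twisted Jacquet module $\pi_{V_j,\theta}$; but extracting from $\ell$ a functional that transforms under $V_j/V_j(F)$ by the character $\theta$ cannot be done by naive averaging, precisely because $E/F$ is non-compact, and at the lower stages the $F$-points of a mirabolic no longer contain enough Weyl elements to regenerate $\SL_{n-j}(E)$, so the clean contradiction used at the top stage is unavailable. This is exactly where temperedness (more generally unitarity) is indispensable: it forces $\pi|_{P_n(E)}$ to be governed by its Gelfand--Graev part $\mathrm{ind}_{N(E)}^{P_n(E)}\psi$ and the successive twisted Jacquet modules to remain generic and to inherit distinction, ruling out premature degeneration along the flag. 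It is also what explains the restriction to cuspidal representations over finite fields, whose restriction to the mirabolic is the irreducible Gelfand--Graev representation, so that every stage automatically lands in the favourable case and the peeling goes through with no intermediate obstruction.
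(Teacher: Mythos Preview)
Your inductive framework along the mirabolic flag is the right one and matches the paper's, and you correctly isolate the crux: one must transport a distinguished functional onto each successive twisted Jacquet module. The gap is that you never supply a mechanism for doing so, and the dichotomy you set up is too coarse to serve as one.

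At stage $k$ your two cases are: (A) $V_k(E)$ acts trivially on \emph{all} of $M_0={}_k\Delta_{V_k(F)}$, versus (B) some non\-trivial $\theta$ of $V_k(E)/V_k(F)$ occurs in $M_0$. Ruling out (A) --- which you do correctly at the top stage via $\langle \GL_n(F),V_n(E)\rangle\supseteq\SL_n(E)$ --- leaves you only with the bare conclusion ${}_k\Delta_{V_k(E),\theta}\neq 0$; it says nothing about whether $\ell$ sees this piece, so you have nothing to feed into stage $k-1$. The dichotomy that actually drives the recursion is on $\ell$ itself: Clifford theory gives $M_0$, as a $\GL_{k-1}(F)\ltimes F^{k-1}$-module, a two-step filtration with quotient ${}_k\Delta_{V_k(E)}$ and submodule ${\rm ind}_{P_{k-1}(F)\ltimes F^{k-1}}^{\GL_{k-1}(F)\ltimes F^{k-1}}\bigl({}_k\Delta_{V_k(E),\theta}\bigr)$, and one asks whether the $\GL_{k-1}(F)$-invariant functional $\bar\ell$ on $M_0$ vanishes on the submodule (hence factors through ${}_k\Delta_{V_k(E)}$) or not (hence, by Mackey, produces a $\nu^{-1/2}$-twisted $P_{k-1}(F)$-invariant functional on ${}_k\Delta_{V_k(E),\theta}$, and the descent continues).

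What must then be excluded is that ${}_k\Delta_{V_k(E)}$ carries a $\GL_{k-1}(F)$-invariant functional, and this is exactly the point you gesture at with ``temperedness forces $\pi|_{P_n(E)}$ to be governed by its Gelfand--Graev part''. The paper's argument is concrete and not a Kirillov-model statement: identifying the recursion with Bernstein--Zelevinsky derivatives, the un-normalized Jacquet module at stage $k$ is $\nu^{1/2}\widetilde{\pi}^{(n-k+1)}$; for tempered $\widetilde{\pi}=\pi_1\times\cdots\times\pi_r$ every non-trivial intermediate derivative has central character of non-unit absolute value, while any $\GL_m(F)$-distinguished $\Lambda$ satisfies $\Lambda^\sigma\cong\Lambda^\vee$ and hence $|\omega_\Lambda|=1$. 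So the quotient piece is never $\GL_{k-1}(F)$-distinguished, the functional is forced onto the induced submodule, and the recursion runs to the bottom. Without this modulus/central-character argument (or a genuine substitute) your proof does not get past its first step --- as you yourself anticipate. Your remark on cuspidals over finite fields is correct for precisely the same reason: there the intermediate derivatives vanish outright, so only the favourable alternative is available.
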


The proof of this proposition will depend on the following lemma which allows an inductive procedure to prove the previous 
proposition.

In what follows, for any $k \geq 0$, we let $\nu$ be the character $\nu(g)= |\det g|$ on $\GL_k(F)$, 
and all of its subgroups.

\begin{lemma} 
Let $_k\Delta$ be a smooth representation of $P^+_k(E)$, the mirabolic subgroup 
of $\GL_k^+(E)$, thus with $P^+_k(E)=\GL^+_{k-1}(E)\rtimes N_{k}(E)= \GL^+_{k-1}(E)\rtimes E^{k-1}$. 
Assume that $_k\Delta$ has a Whittaker model.
Fix a non-trivial character $\psi_0:E/F \rightarrow \C^\times$, and let  
$\psi_{k-1} = \psi_0\circ p_{k-1}: E^{k-1}\rightarrow \C^\times$ be the character on $E^{k-1}$ where $p_{k-1}:
E^{k-1} \rightarrow E$ is the projection to the last co-ordinate.   
 Then if $_k\Delta$ is distinguished by
$P_k(F)$, but the (un-normalized) Jacquet module $_k\Delta_{N(E)}$, a representation of $\GL^+_{k-1}(E)$ is not distinguished by 
$\GL_{k-1}(F)$,
the  smooth representation (un-normalized twisted Jacquet module) $\Delta_{N_k(E),\psi_{k-1}}$ of $P^+_{k-1}(E)$,
 must have a Whittaker model and is $\nu^{-1/2}$-distinguished 
by $P_{k-1}(F)$.
\end{lemma}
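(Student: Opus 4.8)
The plan is to run the standard Bernstein--Zelevinsky machinery for the mirabolic subgroup, adapted to the ``$+$''-groups $P^+_k(E)=N_k(E)\rtimes \GL^+_{k-1}(E)$. Write $\Psi^-\tau=\tau_{N_k(E)}$ for the (un-normalized) Jacquet functor to representations of $\GL^+_{k-1}(E)$ and $\Phi^-\tau=\tau_{N_k(E),\psi_{k-1}}=\Delta_{N_k(E),\psi_{k-1}}$ for the (un-normalized) twisted Jacquet functor to representations of $P^+_{k-1}(E)$, together with $\Psi^+$, the inflation functor (trivial on $N_k$), and $\Phi^+(-)={\rm ind}_{P^+_{k-1}(E)N_k(E)}^{P^+_k(E)}(\psi_{k-1}\otimes -)$. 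The fundamental exact sequence reads
\[ 0 \longrightarrow \Phi^+\Phi^-({}_k\Delta) \longrightarrow {}_k\Delta \longrightarrow \Psi^+\Psi^-({}_k\Delta) \longrightarrow 0, \]
the subobject being the augmentation submodule $\ker({}_k\Delta \to {}_k\Delta_{N_k(E)})$. I must establish two things about $\Phi^-({}_k\Delta)=\Delta_{N_k(E),\psi_{k-1}}$: that it is generic, and that it is $\nu^{-1/2}$-distinguished by $P_{k-1}(F)$.

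For genericity I would invoke the dictionary between Whittaker models and top derivatives: a representation of $P^+_k(E)$ is generic exactly when its highest derivative is nonzero, and the highest derivative of ${}_k\Delta$ equals that of the $P^+_{k-1}(E)$-representation $\Phi^-({}_k\Delta)$ (both being $\Psi^-(\Phi^-)^{k-1}({}_k\Delta)$). Hence the hypothesis that ${}_k\Delta$ has a Whittaker model immediately forces $\Delta_{N_k(E),\psi_{k-1}}$ to have one as well; this step uses none of the distinction hypotheses.

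For distinction I would apply ${\rm Hom}_{P_k(F)}[-,\C]$ to the fundamental sequence. The quotient $\Psi^+\Psi^-({}_k\Delta)$ is the inflation of the $\GL^+_{k-1}(E)$-representation ${}_k\Delta_{N_k(E)}$, on which $N_k(F)$ acts trivially, so a $P_k(F)$-functional on it is the same as a $\GL_{k-1}(F)$-functional on ${}_k\Delta_{N_k(E)}$; by hypothesis this space is zero. Left-exactness of Hom then shows that the given nonzero $P_k(F)$-invariant functional on ${}_k\Delta$ cannot factor through the quotient, hence restricts to a nonzero functional on the subobject $\Phi^+\Phi^-({}_k\Delta)$. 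Thus $\Phi^+(\Delta_{N_k(E),\psi_{k-1}})$ is distinguished by $P_k(F)$, and it remains to transfer this distinction across the functor $\Phi^+$.

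This last transfer is the crux. Writing $V=\Delta_{N_k(E),\psi_{k-1}}$, I would compute ${\rm Hom}_{P_k(F)}[\Phi^+(V),\C]$ by Mackey theory, stratifying by the $P_k(F)$-orbits on $R(E)\backslash P_k(E)\cong E^{k-1}\setminus\{0\}$, where $R(E)=P^+_{k-1}(E)N_k(E)$ and $P_k(F)$ acts through $\GL_{k-1}(F)$ on covectors. Orbits on which $\psi_{k-1}$ fails to be trivial on the corresponding $F$-unipotent subgroup contribute nothing, and one checks that a single orbit survives; Frobenius reciprocity for compact induction on that orbit, together with the modulus-character bookkeeping (where the exponent $1/2$ enters through $\delta_{P_k}$ and the relation $|x|_E=|x|_F^2$ for $x\in F^\times$), identifies the surviving contribution with ${\rm Hom}_{P_{k-1}(F)}[V,\nu^{-1/2}]$. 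Nonvanishing of the former then yields the $\nu^{-1/2}$-distinction of $V$ by $P_{k-1}(F)$. The main obstacle is precisely this orbit analysis: verifying that only the relevant orbit contributes (the vanishing on the others) and pinning down the modulus twist to be exactly $\nu^{-1/2}$ rather than some other power of $\nu$.
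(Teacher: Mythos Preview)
Your approach is correct and is essentially a reordering of the paper's argument. The paper first passes to $N_k(F)$-coinvariants of ${}_k\Delta$, obtaining a smooth representation of the finite-type mirabolic $\GL_{k-1}(F)\ltimes (N_k(E)/N_k(F))=\GL_{k-1}(F)\ltimes F^{k-1}$, and then runs Clifford theory there: the two subquotients are ${}_k\Delta_{N_k(E)}$ and ${\rm ind}_{P_{k-1}(F)\ltimes F^{k-1}}^{\GL_{k-1}(F)\ltimes F^{k-1}}({}_k\Delta_{N_k(E),\psi_{k-1}})$, and a single Mackey/Frobenius step on the latter yields the $\nu^{-1/2}$-distinction by $P_{k-1}(F)$. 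You instead keep the Bernstein--Zelevinsky sequence at the level of $P^+_k(E)$ and postpone the passage to $F$-points to a Mackey computation of $\Hom_{P_k(F)}[\Phi^+(V),\C]$.

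The payoff of the paper's ordering is exactly the point you flag as your ``main obstacle'': after taking $N_k(F)$-coinvariants the abelian normal subgroup is $N_k(E)/N_k(F)\cong F^{k-1}$, on whose nontrivial characters $\GL_{k-1}(F)$ acts \emph{transitively}, so there is visibly a single relevant orbit and no vanishing on extraneous strata needs to be argued. In your ordering the orbit space is $R(E)\backslash P^+_k(E)\cong E^{k-1}\setminus\{0\}$ under $\GL_{k-1}(F)$, which has many orbits; you then have to kill all strata on which the conjugated $\psi_{k-1}$ is nontrivial on $N_k(F)$ and check that what remains collapses to one orbit with stabilizer $P_{k-1}(F)$ (and the modulus bookkeeping). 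This is doable, but note that taking $N_k(F)$-coinvariants first is precisely the device that makes that whole stratification disappear. One small bonus of your write-up: you make explicit the (easy) genericity claim for $\Delta_{N_k(E),\psi_{k-1}}$ via equality of top derivatives, which the paper leaves implicit.
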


\begin{proof} 
Since $_k\Delta$ is distinguished by $P_k(F)$, 
the largest quotient 
$_k\Delta_{N_{k}(F)}$ of $_k\Delta$ on which $N_{k}(F) = F^{k-1}$ 
operates trivially is non-zero, and is distinguished by $\GL_{k-1}(F)$. 
Clearly 
$_k\Delta_{N_{k}(F)}$ is a smooth representation for $ \GL_{k-1}(F) \rtimes (N_{k}(E)/N_{k}(F)) 
= \GL_{k-1}(F) \rtimes F^{k-1}$.   
Thus we are in the context of Clifford theory which applies to any smooth representation
of a group in the presence of an abelian normal subgroup, cf. \cite[\S 5.1 C]{bz76}, for a similar analysis, and \cite[\S 3]{dp16} for developing the Clifford theory in greater generality.

Note that for $k\geq 2$, the action of $\GL_{k-1}(F)$  on the set of 
non-trivial characters of $ N_{k}(E)/N_{k}(F) = F^{k-1}$ is transitive.    

It follows from \cite[Proposition 1]{dp16} that the representation 
$_k\Delta_{N_{k}(F)}$ of  $ \GL_{k-1}(F) \rtimes (N_{k}(E)/N_{k}(F)) = \GL_{k-1}(F) \rtimes F^{k-1}$ 
has a filtration with two subquotients, which are (with un-normalized induction):

\begin{enumerate}
\item  ${\rm ind}_{P_{k-1}(F)\rtimes F^{k-1}}^{\GL_{k-1}(F) \rtimes F^{k-1}} (_k\Delta_{N_{k}(E),\psi_{k-1}}),$

\item   $_k\Delta_{N_{k}(E)}.$
\end{enumerate}

Since we know that $_k\Delta_{N_{k}(F)}$ is distinguished by $\GL_{k-1}(F)$, at least one of the representations above
is distinguished by $\GL_{k-1}(F)$. In case $(1)$, by Mackey theory, $_k\Delta_{N_{k}(E),\psi_{k-1}},$ a smooth representation of $P^+_{k-1}(E)$,
is $\nu^{-1/2}$-distinguished by 
$P_{k-1}(F)$, whereas in case $(2)$, $_k\Delta_{N_{k}(E)},$ is distinguished by $\GL_{k-1}(F)$. By hypothesis in the lemma,
$_k\Delta_{N_{k}(E)},$ is not distinguished by $\GL_{k-1}(F)$, leaving us with only option $(1)$.

This completes the proof of the lemma. 
\end{proof} 

\begin{proof}[Proof of Proposition \ref{AP2}] 
For the proof of the proposition,  we will apply the previous lemma to the representation $_k\Delta = \pi^{(n-k)}|_{P^+_{k}(E)}$ 
where $\pi^{(n-k)}$ is the $(n-k)$-th derivative of Bernstein-Zelevinsky, which is a representation of $\GL_{k}(E)$, starting with $k=n$, and 
$_n\Delta = \pi|_{P^+_n(E)}$. It follows from Bernstein-Zelevinsky that $_k\Delta_{N_{k}(E)} = \nu^{1/2} \pi^{(n-k+1)}$,  
a smooth representation of $\GL_{k-1}^+(E)$.  Further, $\nu^{1/2}{}  _k\Delta_{N_{k}(E),\psi_{k-1}} = {} _{k-1}\Delta,$ a smooth representation of $P^+_{k-1}(E)$, which implies that the way we have defined $_{k}\Delta$, decreasing induction hypothesis holds
if we can ensure that the condition, ``$_k\Delta_{N_k(E)}= \nu^{1/2} \pi^{(n-k+1)}$, a smooth representation of  $\GL^+_{k-1}(E)$,   is not distinguished by $\GL_{k-1}(F)$'', is satisfied.  This is where we will use the temperedness hypothesis. 

Recall that a tempered representation $\pi$ of $\GL_n(E)$ is of the form $\pi = \pi_1 \times \cdots \times \pi_r$ 
where $\pi_i$ are irreducible
unitary discrete series representations of $\GL_{n_i}(E)$. It is known that any unitary discrete series representation 
$\pi_i$ is the unique irreducible
quotient representation of $\rho_i \nu^{-(n_i-1)/2} \times \cdots \times  \rho_i \nu^{(n_i-1)/2}$ 
for a unitary supercuspidal representation $\rho_i$ 
of some $\GL_{m_i}(E)$ for $m_i|n_i$, and that $\pi_i^{(k)} = 0$  if $k$ is not a multiple of $m_i$, and for $k=m_ir$,
$\pi_i^{(m_ir)} $ is the unique irreducible quotient of $\rho_i \nu^{-(n_i-1)/2 +r} \times \cdots \times  \rho_i \nu^{(n_i-1)/2}$. 

The Leibnitz rule for derivatives allows one to calculate the derivative of $\pi = \pi_1 \times \cdots \times \pi_r$, and from the
recipe of the derivatives of a discrete series recalled above, we find that any non-zero positive derivative $\pi_i^{(k)}$ has a 
 central character $\omega(\pi_i^{(k)})$ 
whose absolute value   $|\omega(\pi_i^{(k)})|$ is a positive power of $\nu$ unless $k=0$ or $k=n_i$. 
Since a distinguished representation $\Lambda$ of $\GL_n(E)$ 
must have $\Lambda^\sigma \cong \Lambda^\vee$, in particular, $|\omega(\Lambda)| =1$. This implies that $\nu^{1/2}\pi^{(k)}$ cannot be 
$\GL_{n-k}(F)$ distinguished, unless it is a representation of $\GL_0(E) = 1$. 
\end{proof}

\begin{remark}
We believe that Propositions \ref {AP} and  \ref{AP2} 
remain valid for finite fields, but have not been able to find a proof, except as 
mentioned earlier in the case of cuspidal representations where the proof given here for $p$-adic fields remains valid, and 
the case of $\SL_2(E)$ independently proved in Lemma \ref{AP3}.
\end{remark} 

\begin{remark}
The proof of Proposition \ref{AP2} given here 
is based on an idea contained in \cite{akt04} that although the restriction to mirabolic of a representation of $\GL_n(E)$ has 
two subquotients, the non-generic component  cannot carry $P_n(F)$-invariant linear forms, because of the presence of the modulus character. Since the modulus character for finite fields is trivial, we are not able to rule this possibility out for finite fields. 
Note that \cite{akt04}
uses a lemma, \cite[Lemma 2.4]{akt04}, 
according to which (using un-normalized induction unlike \cite[Lemma 2.4]{akt04}),
$$ \Hom_{P_n(F)}[{\rm ind}_{P_k(E)}^{P_n(E)} (\pi \times \psi_{n-k}), \C] \cong \Hom_{\GL_k(F)}[\pi, \C],$$
where $P_n(E)$ is the mirabolic subgroup of $\GL_n(E)$, and $P_k(E)$ is the subgroup of $\GL_n(E)$ contained in the $(k,n-k)$-parabolic and containing its unipotent radical with Levi replaced by $\GL_k(E) \times {\rm U}_{n-k}(E)$ where ${\rm U}_{n-k}(E)$ is the upper triangular unipotent subgroup of $\GL_{n-k}(E)$, and $\psi_{n-k}$ is its generic character. For the proof of this lemma, \cite{akt04} refers to the main lemma of Flicker's paper \cite{fli93}, whose proof is rather long winded. Our proof here does not need \cite[Lemma 2.4]{akt04}, but rather gives a proof of it. 
\end{remark} 

\section{Fibers of the base change map from $\SU(n)$ to $\SL_n(E)$}\label{sub-parameters}

In this section we consider Langlands parameters for the groups $\SU(n)$ and $\SL_n(E)$. Our aim here is to compute the number of parameters of $\SU(n)$ that lift to a given parameter of $\SL_n(E)$. 

A Langlands parameter of $\SL_n(E)$ 
\[\phi:W^\prime_E \rightarrow \PGL_n(\mathbb C)\]
gives rise to an element of $H^1(W^\prime_E,\PGL_n(\mathbb C))$, where the Weil-Deligne group $W^\prime_E$ of $E$ acts trivially on $\PGL_n(\mathbb C)$. It is well-known that such a parameter $\phi$ lifts to a Langlands parameter $\widetilde{\phi}$ of $\GL_n(E)$
\[\widetilde{\phi}:W^\prime_E \rightarrow \GL_n(\mathbb C),\]
which can be thought of as an element of $H^1(W^\prime_E,\GL_n(\mathbb C))$ with the $W^\prime_E$-action on $\GL_n(\mathbb C)$ being trivial. Indeed, the above observation follows from Tate's theorem according to 
 which $H^2(W^\prime_E,\mathbb C^\times)= 0$ for the trivial 
action of  $W^\prime_E$ on
$\mathbb C^\times$. We  note that though Tate's theorem is usually stated in terms of the absolute Galois group $\Gal(\bar{E}/E)
$ instead of the Weil-Deligne group $W^\prime_E$, i.e., $H^2(\Gal(\bar{E}/E),\mathbb C^\times)= 0$ with $\Gal(\bar{E}/E)$ 
acting trivially on $\mathbb C^\times$ (cf. \cite[Theorem 4]{ser77}); the $W^\prime_E$-version, $H^2(W'_E,\C^\times)= 0$, and its 
relation to lifting of continuous projective representations is known too, cf. \cite[Theorem 1, Theorem 8]{raj04}.  We will
continue to call the vanishing of $H^2(W'_E,\mathbb C^\times)$ as Tate's theorem.

That a Langlands parameter for $\SL_n(E)$ lifts to a Langlands parameter for $\GL_n(E)$ 
is related to the fact that an irreducible admissible representation $\pi$ of $\SL_n(E)$ 
occurs in the restriction of an irreducible admissible representation $\widetilde{\pi}$ of $\GL_n(E)$.

As in the case of $(\GL(n),\SL(n))$, an irreducible representation of $\SU(n)$  occurs in the restriction of an irreducible admissible representation 
 of $\U(n)$.  We will check below that a Langlands parameter for $\SU(n)$ lifts to a Langlands parameter for $\U(n)$.     

Since the Langlands dual group of $\U(n)$ is
\[{^L}\U(n) = \GL_n(\mathbb C) \rtimes W^\prime_F,\]
where $W^\prime_F$ acts by projection to Gal$(E/F)$, and via
\[\sigma(g) = J {^t}g^{-1}J^{-1},\]
where $J$ is the anti-diagonal matrix with alternating $1, -1$. We will denote the group $\GL_n(\C)$ with this action of $W^\prime_F$ by $\GL_n(\C)[\tau]$; similarly for $\PGL_n(\C)$.
Thus a Langlands parameter for $\U(n)$ gives rise to 
an element of $H^1(W^\prime_F,\GL_n(\mathbb C)[\tau])$, where $W^\prime_F$ acts on $\GL_n(\mathbb C)$ as above. 
Similarly, a Langlands parameter for $\SU(n)$ gives rise to  an element of $H^1(W^\prime_F,\PGL_n(\mathbb C)[\tau])$.

Thus the fact that a Langlands parameter for $\SU(n)$ lifts to a Langlands parameter for $\U(n)$ follows from the following lemma.

\begin{lemma}\label{lem-tate}
Let $W^\prime_F$ operate on $\mathbb C^\times$ by $z \mapsto z^{-1}$ via the quotient 
$W^\prime_F \rightarrow W^\prime_F/W^\prime_E \cong \mathbb Z/2$. Denote the corresponding representation of $W^\prime_F$ by $\C^\times [\tau]$. 
Then,
\[H^2(W^\prime_F,\mathbb C^\times[\tau])=0.\]
\end{lemma}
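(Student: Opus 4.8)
The plan is to deduce the vanishing from Tate's theorem $H^2(W^\prime_E,\mathbb C^\times)=0$ (which applies because, after restriction to $W^\prime_E$, the action on $\mathbb C^\times[\tau]$ becomes \emph{trivial}) by means of a single short exact sequence of $W^\prime_F$-modules together with Shapiro's lemma. Write $\mathbb C^\times[\mathrm{triv}]$ for $\mathbb C^\times$ with trivial $W^\prime_F$-action, and let $M=\Ind_{W^\prime_E}^{W^\prime_F}\mathbb C^\times$ be the module induced from the trivial $W^\prime_E$-module $\mathbb C^\times$; concretely $M\cong\mathbb C^\times\oplus\mathbb C^\times$, with $W^\prime_E$ acting trivially and a fixed lift $\sigma$ of the nontrivial element of $\Gal(E/F)$ interchanging the two factors. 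One checks directly that
\[1\longrightarrow \mathbb C^\times[\tau]\stackrel{c\mapsto(c,c^{-1})}{\longrightarrow} M \stackrel{(a,b)\mapsto ab}{\longrightarrow}\mathbb C^\times[\mathrm{triv}]\longrightarrow 1\]
is an exact sequence of $W^\prime_F$-modules: the first map is equivariant precisely because $\sigma$ acts by inversion on $\mathbb C^\times[\tau]$ and by the swap on $M$; the second map is the trace (sum-over-cosets) map; their composite is trivial and the kernel of the second map is exactly the image of the first.

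Next I would run the long exact cohomology sequence of $W^\prime_F$ attached to this short exact sequence. By Shapiro's lemma $H^i(W^\prime_F,M)\cong H^i(W^\prime_E,\mathbb C^\times)$, so in particular $H^2(W^\prime_F,M)=H^2(W^\prime_E,\mathbb C^\times)=0$ by Tate's theorem. The relevant segment reads
\[H^1(W^\prime_F,M)\stackrel{p_*}{\longrightarrow} H^1(W^\prime_F,\mathbb C^\times[\mathrm{triv}])\stackrel{\delta}{\longrightarrow} H^2(W^\prime_F,\mathbb C^\times[\tau])\longrightarrow H^2(W^\prime_F,M)=0,\]
which identifies $H^2(W^\prime_F,\mathbb C^\times[\tau])$ with the cokernel of $p_*$. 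Since the second map of the short exact sequence is the trace map, $p_*$ is, under the Shapiro identification, exactly the corestriction map $\mathrm{cor}\colon H^1(W^\prime_E,\mathbb C^\times)\to H^1(W^\prime_F,\mathbb C^\times)$.

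Finally I would identify this corestriction concretely. As the coefficients carry the trivial action here, local class field theory gives $H^1(W^\prime_E,\mathbb C^\times)=\Hom(W^\prime_E,\mathbb C^\times)=\widehat{E^\times}$ and $H^1(W^\prime_F,\mathbb C^\times)=\widehat{F^\times}$. Under these identifications corestriction is dual to the transfer $W^{\prime\,\mathrm{ab}}_F\to W^{\prime\,\mathrm{ab}}_E$, which corresponds to the inclusion $F^\times\hookrightarrow E^\times$; hence $\mathrm{cor}$ becomes the restriction-of-characters map $\psi\mapsto\psi|_{F^\times}$ from $\widehat{E^\times}$ to $\widehat{F^\times}$. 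As a consistency check, $\mathrm{cor}\circ\mathrm{res}$ is multiplication by $[W^\prime_F:W^\prime_E]=2$, matching $\chi\mapsto(\chi\circ\Nm)|_{F^\times}=\chi^2$ since $\Nm(x)=x^2$ for $x\in F^\times$. The map $\psi\mapsto\psi|_{F^\times}$ is surjective: $F^\times$ is a closed subgroup of $E^\times$, and every continuous character of $F^\times$ extends to one of $E^\times$ because $\mathbb C^\times$ is divisible — for the unitary part this is surjectivity of $\widehat{E^\times}\to\widehat{F^\times}$ on Pontryagin duals, and the absolute-value part $|\cdot|_F^s$ is extended by $|\cdot|_E^{s/2}$. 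Therefore $\mathrm{coker}(p_*)=0$ and $H^2(W^\prime_F,\mathbb C^\times[\tau])=0$.

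The only genuinely delicate point, and the step I would treat most carefully, is the identification of the trace-induced map $p_*$ first with corestriction and then, via the transfer–inclusion compatibility of class field theory, with the \emph{surjective} restriction-of-characters map $\widehat{E^\times}\to\widehat{F^\times}$; once this is pinned down, everything else is a formal consequence of Tate's theorem and Shapiro's lemma. (An entirely equivalent route is the Hochschild--Serre spectral sequence for $1\to W^\prime_E\to W^\prime_F\to\Gal(E/F)\to 1$, but there one must separately kill the term $H^2(\Gal(E/F),\mathbb C^\times[\tau])\cong\mathbb Z/2$ by a transgression argument, so the short-exact-sequence approach above is cleaner.)
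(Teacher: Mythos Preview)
Your argument is correct, and it follows a genuinely different route from the paper's.

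The paper proceeds via the ``Kummer'' sequence $1\to\Z/2\to\C^\times[\tau]\xrightarrow{[2]}\C^\times[\tau]\to 1$: it first uses restriction--corestriction (with Tate's theorem $H^2(W'_E,\C^\times)=0$) to show that $H^2(W'_F,\C^\times[\tau])$ is $2$-torsion, and then from the associated long exact sequence reduces the vanishing to showing that the squaring map on $H^1(W'_F,\C^\times[\tau])$ has cokernel of size exactly $|H^2(W'_F,\Z/2)|=2$. For this last step the paper explicitly identifies $H^1(W'_F,\C^\times[\tau])\cong\Hom(E^\times/F^\times,\C^\times)\cong\Hom(U(1),\C^\times)$ and checks that a character of $U(1)$ has a square root iff it is trivial at $-1$. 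By contrast, you embed $\C^\times[\tau]$ into the induced module $M=\Ind_{W'_E}^{W'_F}\C^\times$ via $c\mapsto(c,c^{-1})$, use Shapiro together with Tate to kill $H^2(W'_F,M)$, and then identify the remaining map $H^1(W'_F,M)\to H^1(W'_F,\C^\times)$ with corestriction, which under class field theory becomes the surjective restriction $\widehat{E^\times}\to\widehat{F^\times}$. Your argument is cleaner in that it never touches $2$-torsion or the explicit shape of $H^1(W'_F,\C^\times[\tau])$, at the cost of invoking the (standard) compatibility ``trace on induced module $\leadsto$ corestriction $\leadsto$ transfer $\leadsto$ inclusion $F^\times\hookrightarrow E^\times$''. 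One thing the paper's approach buys, however, is that its explicit identification $H^1(W'_F,\C^\times[\tau])\cong\Hom(E^\times/F^\times,\C^\times)$ is reused later (in the proof of Proposition~\ref{thm-fiber}) to describe twisting of $\SU(n)$-parameters, so that computation is not wasted effort in the overall flow of the paper.
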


\begin{proof}
Consider the restriction-corestriction sequence
\[H^2(W^\prime_F,\mathbb C^\times[\tau]) \rightarrow H^2(W^\prime_E,\mathbb C^\times) \rightarrow H^2(W^\prime_F,\mathbb C^\times[\tau]).\] 
Since the composite map is multiplication by $2$, and since $H^2(W^\prime_E,\mathbb C^\times)=0$ by Tate's theorem, it follows that
\[2 H^2(W^\prime_F,\mathbb C^\times[\tau]) = 0.\]
Using the exact sequence,
\[
\xymatrix@C=2pc@R=2pc{
1 \ar[r] & \mathbb Z/2 \ar[r] & \mathbb C^\times[\tau] \ar[r]^{[2]} & \mathbb C^\times[\tau] \ar[r]^{} & 1,
}
\] since $2 H^2(W^\prime_F,\mathbb C^\times[\tau]) =0$,
it follows that we have an exact sequence
\[H^1(W^\prime_F,\mathbb C^\times[\tau]) \rightarrow H^1(W^\prime_F,\mathbb C^\times[\tau]) \rightarrow H^2(W^\prime_F,\mathbb Z/2) \rightarrow H^2(W^\prime_F,\mathbb C^\times[\tau]) \rightarrow 0. \]
Now,
\[H^2(W^\prime_F,\mathbb Z/2) = \mathbb Z/2,\]
since this is the $2$-torsion in the Brauer group. Therefore, to prove that 
\[H^2(W^\prime_F,\mathbb C^\times[\tau])=0,\]
it suffices to prove that 
\[2 H^1(W^\prime_F,\mathbb C^\times[\tau]) \neq H^1(W^\prime_F,\mathbb C^\times[\tau]).\]

A cocycle in $H^1(W^\prime_F,\mathbb C^\times[\tau]) $ 
upon restriction to $W^\prime_E$ gives rise to a character of $E^\times$ which is 
trivial on elements of $F^\times$ which arise as norms from $E^\times$. It can be seen that a character  $\chi: 
E^\times/\Nm E^\times \rightarrow \C^\times$ extends to a cocycle on $W^\prime_F$ with values in $\mathbb C^\times[\tau] $ if and only if $\chi$  is trivial on $F^\times$, and then the cocycle is unique up to coboundary. Thus, 
\[H^1(W^\prime_F,\mathbb C^\times[\tau]) = {\rm Hom}(E^\times/F^\times, \mathbb C^\times[\tau]) = {\rm Hom}(U(1),\mathbb C^\times[\tau]),\]
where the second equality is the result of the identification $\chi \rightarrow \chi'$  via $\chi'(x/x^\sigma)=\chi(x)$.
Clearly, a character $\chi'$ of $U(1)$ has a square root if and only if $\chi'(-1)=1$, and therefore 
\[H^1(W^\prime_F,\mathbb C^\times[\tau])/2 H^1(W^\prime_F,\mathbb C^\times[\tau]) = \mathbb Z/2,\]
proving the lemma.
\end{proof}

We are interested in computing the number of Langlands parameters of $\SU(n)$ that lift to a given Langlands parameter of $\SL_n(E)$. Thus, we need to analyse the fiber of the  restriction map
\[
\xymatrix@C=2pc@R=2pc{
H^1(W^\prime_F,\PGL_n(\mathbb C)[\tau] 
) \ar[r]^{P\Phi} & H^1(W^\prime_E,\PGL_n(\mathbb C)).
}
\]

For this, observe that the above map fits into the following commutative diagram:
\[ 
\xymatrix@C=2pc@R=2pc{
H^1(W^\prime_F,\PGL_n(\mathbb C)[\tau] ) \ar[r]^{P\Phi} & H^1(W^\prime_E,\PGL_n(\mathbb C)) \\
H^1(W^\prime_F,GL_n(\mathbb C)[\tau] ) \ar[u]_{P_F} \ar[r]^{\Phi} & H^1(W^\prime_E,\GL_n(\mathbb C)) \ar[u]_{P_E}
}
\]
where $\Phi$ is the restriction map which corresponds to lifting a Langlands parameter of $\U(n)$ to a Langlands parameter of $\GL_n(E)$, and the maps $P_F$ and $P_E$ are 
the natural projection maps. Note that we have proved in the preceding paragraphs that both  the maps $P_F$ and $P_E$ 
 are surjective; surjectivity of $P_E$ follows from Tate's theorem and surjectivity of $P_F$ is a consequence of Lemma \ref{lem-tate}. 

The map $\Phi$ which takes a $\U(n)$-parameter to a $\GL_n(E)$-parameter is well understood, and  its image consists precisely of conjugate self-dual Langlands parameters of $W^\prime_E$ of parity $+1$ if $n$ is odd, and parity $-1$ if $n$ is even. 
We will need to make use of another well-known fact about the map $\Phi$
for which we refer to \cite[Proposition 7]{pra16} for a proof.

\begin{lemma}\label{prop-injective}
The restriction map
\[ 
\xymatrix@C=2pc@R=2pc{
H^1(W^\prime_F,\GL_n(\mathbb C)[\tau] ) \ar[r]^{\Phi} & H^1(W^\prime_E,\GL_n(\mathbb C)) 
}
\]
is injective.
\end{lemma}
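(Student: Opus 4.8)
The plan is to read the cohomology set $H^1(W^\prime_F,\GL_n(\C)[\tau])$ concretely as the set of $\GL_n(\C)$-conjugacy classes of Langlands parameters $\phi\colon W^\prime_F\to \GL_n(\C)\rtimes W^\prime_F$ of $\U(n)$, and $H^1(W^\prime_E,\GL_n(\C))$ as the set of isomorphism classes of $n$-dimensional Weil--Deligne representations $\rho$ of $W^\prime_E$, with $\Phi$ the restriction $\phi\mapsto\rho=\phi|_{W^\prime_E}$. Injectivity is then exactly the assertion that the fiber of $\Phi$ over each $\rho$ has at most one point. Since any two parameters with conjugate restrictions may be conjugated by an element of $\GL_n(\C)$ so as to have literally equal restriction $\rho$, the only remaining freedom is conjugation by the centralizer $Z_\rho=\{g : g\rho(w)=\rho(w)g\text{ for all }w\}$; so I must show that the set of $\U(n)$-parameters extending a fixed $\rho$, taken modulo $Z_\rho$, is a single class.

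Next I would repackage such an extension as bilinear-form data. Writing a parameter of $\U(n)$ as $w\mapsto(c(w),w)$ and recording $A=c(s)$ for a fixed $s\in W^\prime_F\smallsetminus W^\prime_E$, the cocycle identity forces $A\,\tau(\rho(w))\,A^{-1}=\rho(sws^{-1})$ together with $A\,\tau(A)=\rho(s^2)$, where $\tau(X)=J\,{}^tX^{-1}J^{-1}$. Via the matrix $J$ this is the same as endowing the space $V=\C^n$ of $\rho$ with a nondegenerate bilinear form $b$ that is invariant in the conjugate sense (so that $\rho^\sigma\cong\rho^\vee$) and whose symmetry type (conjugate orthogonal versus conjugate symplectic) is the one pinned down by $J$ and the identity $A\,\tau(A)=\rho(s^2)$. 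Under this dictionary two extensions are $Z_\rho$-conjugate precisely when the corresponding forms $b_1,b_2$ are carried into one another by some $g\in Z_\rho=\operatorname{Aut}(\rho)$, so injectivity becomes the statement that a conjugate-invariant form of the prescribed symmetry type on $\rho$ is unique up to $\operatorname{Aut}(\rho)$.

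I would prove this last uniqueness by decomposing $\rho$ into indecomposable Weil--Deligne summands and grouping them according to the pairing $\rho_i\mapsto(\rho_i^\sigma)^\vee$ induced by $b$. A summand $\rho_i$ with $(\rho_i^\sigma)^\vee\not\cong\rho_i$ is paired off with a distinct summand, and on such a block the form is merely a perfect duality between the two isotypic pieces, so all choices are equivalent under $\operatorname{Aut}$ (this is Shapiro's lemma on the cohomological side, $H^1(\{1\},\GL_m(\C))=1$). On a self-paired block $\rho_i\otimes\C^{m_i}$ with $(\rho_i^\sigma)^\vee\cong\rho_i$, Schur's lemma makes the space of invariant forms one-dimensional over the canonical form on $\rho_i$, so $b$ factors as $b_i\otimes h$ with $h$ a bilinear form on $\C^{m_i}$; the fixed global symmetry type forces a fixed symmetry type on $h$, and over $\C$ a nondegenerate symmetric (respectively alternating) form of given size is unique up to congruence, i.e. unique up to $\operatorname{Aut}(\rho_i\otimes\C^{m_i})=\GL_{m_i}(\C)$. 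Finally the central scalars $\C^\times\subset Z_\rho$ contribute nothing, since $\C^\times/(\C^\times)^2=1$. Assembling the blocks shows that $b$, hence the $\U(n)$-parameter, is determined by $\rho$, which is the desired injectivity.

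The step I expect to be the main obstacle is the bookkeeping of symmetry types across the decomposition, and in particular making the reduction rigorous for Weil--Deligne representations carrying a nontrivial monodromy operator $N$: there one must replace ``irreducible constituent'' by ``indecomposable constituent'', check that each such indecomposable has a well-defined parity (as in the Gan--Gross--Prasad formalism) and that the global symmetry type distributes over the blocks as claimed, so that the ``unique nondegenerate form over $\C$'' argument still applies block by block. Equivalently, in cohomological language the fiber over $\rho$ is a torsor under $H^1(\Gal(E/F),Z_\rho)$ for the $\tau$-twisted action, and the real content is that this $H^1$ vanishes; the block analysis above is precisely the verification of that vanishing.
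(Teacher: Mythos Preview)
Your proposal is correct. Translating a $\U(n)$-parameter extending a fixed $\rho$ into a nondegenerate conjugate-invariant bilinear form on $\rho$ of the sign dictated by $J$, and then proving block-by-block uniqueness of such a form up to $\operatorname{Aut}(\rho)$, is the standard argument; the cohomological rephrasing you give at the end --- that the fiber over $\rho$ is a torsor for $H^1(\Gal(E/F),Z_\rho)$ under the twisted action, and that this $H^1$ vanishes --- is equally valid and is how the result is often packaged.

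The paper, however, does not prove this lemma at all: it simply records it as a well-known fact and cites \cite[Proposition~7]{pra16} for a proof. So there is nothing in the paper itself to compare your argument against. What you have sketched is essentially what that reference (and the surrounding discussion in \cite{ggp12}) contains. The concern you flag about nontrivial monodromy is genuine but resolves exactly as you anticipate: for Frobenius-semisimple parameters each indecomposable summand has the form $\mu\otimes\operatorname{Sp}(k)$, has endomorphism ring $\C$, and when conjugate self-dual carries a unique (up to scalar) invariant form whose sign is that of $\mu$ times $(-1)^{k-1}$; consequently $Z_\rho$ is a product of general linear groups on which the Galois involution acts either by swapping paired factors or by transpose-inverse on self-paired ones, and $H^1(\Z/2,\GL_m(\C))$ vanishes in both cases over $\C$.
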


We will have many occasions to use the following lemma, cf. \cite[Proposition 42]{ser02}.

\begin{lemma} \label{center} 
Suppose $G$ is a group with an action of $W^\prime_F$, and  $Z$ is a central subgroup of  $G$ left invariant by the action of $W^\prime_F$. Then elements $\phi_1,\phi_2$ of $H^1(W^\prime_F, G)$ which lie over the same element of $H^1(W^\prime_F,G/Z)$ are 
translates of each other by an element of $H^1(W^\prime_F,Z)$, i.e., $\phi_2 = \phi_1 \cdot c$ for some $c \in H^1(W^\prime_F,Z)$.
\end{lemma}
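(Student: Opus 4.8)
The plan is to run the standard twisting argument for non-abelian cohomology attached to the central extension
\[
1 \longrightarrow Z \longrightarrow G \longrightarrow G/Z \longrightarrow 1
\]
of groups with $W^\prime_F$-action, exactly in the spirit of \cite[Proposition 42]{ser02}. Throughout I write the action of $s \in W^\prime_F$ on $G$ as $g \mapsto {}^sg$, so that a $1$-cocycle $\phi \in Z^1(W^\prime_F,G)$ is a map with $\phi(st) = \phi(s)\cdot {}^s\phi(t)$, and two such cocycles are cohomologous precisely when $\phi'(s) = g^{-1}\phi(s)\,{}^sg$ for a fixed $g \in G$. Since $Z$ is central and $W^\prime_F$-invariant, $H^1(W^\prime_F,Z)$ is an abelian group, and pointwise multiplication $(\phi\cdot c)(s) = \phi(s)c(s)$ carries $Z^1(W^\prime_F,G)\times Z^1(W^\prime_F,Z)$ into $Z^1(W^\prime_F,G)$; this is the translation in the statement, and checking that $\phi\cdot c$ is again a cocycle is a one-line computation in which centrality of $Z$ is used to commute the factor $c(s)$ past ${}^s\phi(t)$.

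First I would reduce to the case where $\phi_1$ and $\phi_2$ have \emph{literally equal} images in $Z^1(W^\prime_F,G/Z)$, not merely cohomologous ones. By hypothesis the images agree in $H^1(W^\prime_F,G/Z)$, so there is some $\bar g \in G/Z$ with $\overline{\phi_2}(s) = \bar g^{\,-1}\,\overline{\phi_1}(s)\,{}^s\bar g$ for all $s$, where bars denote projection to $G/Z$. Lifting $\bar g$ to an element $g \in G$ and replacing $\phi_1$ by the cohomologous cocycle $s \mapsto g^{-1}\phi_1(s)\,{}^sg$ — which alters neither its class in $H^1(W^\prime_F,G)$ nor the truth of the desired conclusion — I may assume $\phi_1$ and $\phi_2$ project to the same cocycle of $G/Z$.

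Now set $c(s) = \phi_1(s)^{-1}\phi_2(s)$. Because $\phi_1(s)$ and $\phi_2(s)$ have equal image in $G/Z$, each $c(s)$ lies in $Z$, so $c \colon W^\prime_F \to Z$. To see $c \in Z^1(W^\prime_F,Z)$, I would expand $\phi_2(st) = \phi_1(st)c(st)$ in two ways using the cocycle relations for $\phi_1$ and $\phi_2$ and the fact that the values of $c$ are central, then cancel the common $\phi_1$-factors; this yields precisely $c(st) = c(s)\,{}^sc(t)$. By construction $\phi_2 = \phi_1\cdot c$ as cocycles with $c \in Z^1(W^\prime_F,Z)$, and passing to classes gives the assertion with $c \in H^1(W^\prime_F,Z)$.

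There is no serious obstacle: the lemma is a formal consequence of the central extension together with the cocycle identities. The only points needing care are the initial reduction to equal (rather than cohomologous) images in $G/Z$, which relies on lifting an element of $G/Z$ to $G$ — harmless, since that is exactly what the surjection $G \to G/Z$ provides — and the repeated use of the centrality of $Z$, without which neither the translation action nor the cocycle property of $c$ would hold.
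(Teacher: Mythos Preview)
Your argument is correct and is precisely the standard cocycle manipulation underlying \cite[Proposition 42]{ser02}. The paper itself gives no proof of this lemma, merely citing Serre, so your write-up is in fact more detailed than what appears there; the reduction to equal (rather than cohomologous) projections and the verification that $c(s)=\phi_1(s)^{-1}\phi_2(s)$ is a $Z$-cocycle are exactly the points Serre's proposition encodes.
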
  

The following proposition is a simple consequence of the previous two lemmas using the definition of strong and weak 
equivalence introduced at the end of \S2.
 
\begin{proposition}\label{thm-fiber}
Let $\rho \in H^1(W^\prime_F,\PGL_n(\mathbb C)[\tau] )$. Let $\widetilde{\rho} \in H^1(W^\prime_F,\GL_n(\mathbb C)[\tau] )$ be such that $P_F(\widetilde{\rho})=\rho$. Then the cardinality of the set
\[
\{\mu \in H^1(W^\prime_F,\PGL_n(\mathbb C)[\tau] ) \mid P\Phi(\mu)=P\Phi(\rho)\} 
\]
equals $q(\Phi(\widetilde{\rho}))$, which is the number of strong equivalence classes in the weak equivalence class of $\Phi(\widetilde{\rho})$ in the class among conjugate self-dual representations of a given parity.
\end{proposition}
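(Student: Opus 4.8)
The plan is to produce an explicit bijection between the fiber $\{\mu \in H^1(W^\prime_F,\PGL_n(\mathbb C)[\tau]) \mid P\Phi(\mu)=P\Phi(\rho)\}$ and the set $[\Phi(\widetilde{\rho})]_w/\sim_s$, by chasing the commutative square relating $\Phi$ and $P\Phi$ through the vertical projections $P_E,P_F$, and feeding in the three structural facts already available: surjectivity of $P_F$ (from Lemma \ref{lem-tate}) and $P_E$ (from Tate's theorem), injectivity of $\Phi$ (Lemma \ref{prop-injective}), and the translation principle of Lemma \ref{center}.

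First I would define the map from the fiber to $[\Phi(\widetilde{\rho})]_w/\sim_s$. Given $\mu$ in the fiber, use surjectivity of $P_F$ to pick a lift $\widetilde{\mu}$ with $P_F(\widetilde{\mu})=\mu$. Commutativity gives $P_E(\Phi(\widetilde{\mu}))=P\Phi(\mu)=P\Phi(\rho)=P_E(\Phi(\widetilde{\rho}))$, so $\Phi(\widetilde{\mu})$ and $\Phi(\widetilde{\rho})$ lie over the same class in $H^1(W^\prime_E,\PGL_n(\mathbb C))$. Lemma \ref{center} applied over $W^\prime_E$ with $G=\GL_n(\mathbb C)$ and $Z=\mathbb C^\times$ then yields $\Phi(\widetilde{\mu}) \cong \Phi(\widetilde{\rho})\otimes\chi$ for a character $\chi$ of $E^\times$, so $\Phi(\widetilde{\mu})$ is weakly equivalent to $\Phi(\widetilde{\rho})$; as it lies in the image of $\Phi$, it is conjugate self-dual of the prescribed parity. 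To see $[\Phi(\widetilde{\mu})]_s$ does not depend on the lift, observe that two lifts of $\mu$ differ by an element $c$ of $H^1(W^\prime_F,\mathbb C^\times[\tau])$ (again Lemma \ref{center}, now over $W^\prime_F$), and by the identification $H^1(W^\prime_F,\mathbb C^\times[\tau])=\Hom(E^\times/F^\times,\mathbb C^\times)$ from the proof of Lemma \ref{lem-tate}, the restriction of $c$ is a character of $E^\times/F^\times$; by equivariance of restriction the two resulting parameters differ by such a twist and hence are strongly equivalent. Thus $\mu \mapsto [\Phi(\widetilde{\mu})]_s$ is well defined.

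Next I would check this map is a bijection. For surjectivity, take any representative $\Phi(\widetilde{\rho})\otimes\chi$ that is conjugate self-dual of the given parity; since the image of $\Phi$ is exactly the set of such parameters, $\Phi(\widetilde{\rho})\otimes\chi=\Phi(\widetilde{\mu})$ for some $\widetilde{\mu}$, and $\mu:=P_F(\widetilde{\mu})$ lands in the fiber because $P_E$ kills the central twist: $P\Phi(\mu)=P_E(\Phi(\widetilde{\rho})\otimes\chi)=P_E(\Phi(\widetilde{\rho}))=P\Phi(\rho)$. For injectivity, suppose $\mu_1,\mu_2$ lie in the fiber with $\Phi(\widetilde{\mu}_2)\cong\Phi(\widetilde{\mu}_1)\otimes\chi$ for $\chi$ a character of $E^\times/F^\times$. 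Realize $\chi$ as the restriction of some $c\in H^1(W^\prime_F,\mathbb C^\times[\tau])$; then $\Phi(\widetilde{\mu}_1\cdot c)=\Phi(\widetilde{\mu}_1)\otimes\chi=\Phi(\widetilde{\mu}_2)$, and injectivity of $\Phi$ (Lemma \ref{prop-injective}) forces $\widetilde{\mu}_2=\widetilde{\mu}_1\cdot c$. Applying $P_F$ and using that $c$ is central, so its image in $\PGL_n(\mathbb C)[\tau]$ is trivial, gives $\mu_1=\mu_2$. Hence the fiber has cardinality $|[\Phi(\widetilde{\rho})]_w/\sim_s|=q(\Phi(\widetilde{\rho}))$.

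I expect the main obstacle to be bookkeeping the precise match between the two sources of indeterminacy: the set of lifts of $\mu$ under $P_F$ is a torsor under $H^1(W^\prime_F,\mathbb C^\times[\tau])$, whereas strong equivalence is twisting by characters of $E^\times/F^\times$, and these must be identified on the nose. The crux is that restriction via $\Phi$ carries $H^1(W^\prime_F,\mathbb C^\times[\tau])$ isomorphically onto $\Hom(E^\times/F^\times,\mathbb C^\times)$ — exactly the computation inside the proof of Lemma \ref{lem-tate} — together with the compatibility of this central-twist action with the pointed-set maps $H^1(-,\GL_n(\mathbb C))$ under equivariance of restriction. Once that dictionary is in place, lift-independence, surjectivity, and injectivity are all straight diagram chases.
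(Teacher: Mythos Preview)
Your proposal is correct and follows essentially the same approach as the paper: both arguments identify the fiber of $P\Phi$ with strong equivalence classes inside the weak equivalence class by combining Lemma~\ref{center} (so that lifts under $P_F$ and $P_E$ are torsors for $H^1(W'_F,\C^\times[\tau])$ and $\Hom(E^\times,\C^\times)$ respectively), the identification $H^1(W'_F,\C^\times[\tau])\cong\Hom(E^\times/F^\times,\C^\times)$ from the proof of Lemma~\ref{lem-tate}, and the injectivity of $\Phi$ from Lemma~\ref{prop-injective}. The paper's own proof is simply a terser telegraphing of the same diagram chase you have spelled out in full.
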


\begin{proof} By Lemma \ref{center}, parameters for $\SL_n(E)$ can be identified to parameters for $\GL_n(E)$ up to 
twisting by characters $\chi: E^\times \rightarrow \C^\times$. Similarly, by Lemma \ref{center}, parameters for $\SU_n(F)$ can be identified to parameters for $\U_n(F)$ 
up to 
twisting by characters $\chi: E^\times/F^\times \rightarrow \C^\times$ (because $H^1(W^\prime_F, \C^\times[\tau]) \cong \Hom(U(1), \C^\times) = \Hom(E^\times/F^\times, \C^\times)$). By 
Lemma \ref{prop-injective}, parameters for $\U_n(F)$ embed into parameters for $\GL_n(E)$ by the base change map $\Phi$.  
Thus the cardinality of the  
fiber of the base change map  
\[\xymatrix@C=2pc@R=2pc{ H^1(W^\prime_F,\PGL_n(\mathbb C)[\tau] 
) \ar[r]^{P\Phi} & H^1(W^\prime_E,\PGL_n(\mathbb C))}\]
is the number of strong equivalence classes in the weak equivalence class of $\Phi(\widetilde{\rho})$ among conjugate self-dual representations of a given parity ($=(-1)^{n-1}$).\end{proof}

We next restate Theorem \ref{conj-fr} taking into account Lemma \ref{prop-injective} 
according to which  parameters for $\U_n(F)$ embed into parameters for $\GL_n(E)$ by the base change map $\Phi$.  

\begin{theorem}\label{thm-gln}
An irreducible admissible generic representation $\widetilde{\pi}$ of $\GL_n(E)$ is distinguished with respect to $\GL_n(F)$ 
(if $n$ is odd, and $\omega_{E/F}$-distinguished if $n$ is even) if and only if its Langlands parameter $\widetilde{\rho}_{\widetilde{\pi}}$ is in the image of
\[\Phi:H^1(W^\prime_F,\GL_n(\mathbb C)[\tau] ) \rightarrow H^1(W^\prime_E,\GL_n(\mathbb C)),\]
and moreover,
\begin{equation}\label{eq-gln}
\dim_\C {\rm Hom}_{\GL_n(F)}[\widetilde{\pi},\C] = |\Phi^{-1}(\widetilde{\rho}_{\widetilde{\pi}})|. 
\end{equation}
\end{theorem}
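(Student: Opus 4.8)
The plan is to read this theorem as the assembly of three facts already available in the excerpt: the characterization of distinction through the image of $\Phi$ (Theorem \ref{conj-fr}), the injectivity of $\Phi$ (Lemma \ref{prop-injective}), and Flicker's multiplicity-one statement (Proposition \ref{prop-flicker}). No deep input is reproved; the work is purely in matching up the bookkeeping, so the argument should be short.

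First I would dispose of the ``if and only if'' clause, which is literally Theorem \ref{conj-fr} once one observes that the group $\widehat{\U}_n$ occurring there is exactly the group $\GL_n(\C)[\tau]$ of the present section. By definition ${}^L\U(n)=\GL_n(\C)\rtimes W^\prime_F$ with $W^\prime_F$ acting through $\sigma(g)=J\,{}^tg^{-1}J^{-1}$, so $H^1(W^\prime_F,\widehat{\U}_n)=H^1(W^\prime_F,\GL_n(\C)[\tau])$ and the restriction map to $H^1(W^\prime_E,\GL_n(\C))$ is the same map $\Phi$ in both formulations. I would keep the convention of the parenthetical in the iff clause throughout: write $\eta$ for the trivial character when $n$ is odd and for $\omega_{E/F}$ when $n$ is even, so that the symbol $\C$ in the displayed formula \eqref{eq-gln} is to be read as this $\eta$. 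With this reading, Theorem \ref{conj-fr} says precisely that $\widetilde{\pi}$ is $\eta$-distinguished if and only if $\widetilde{\rho}_{\widetilde{\pi}}$ lies in the image of $\Phi$, that image being the conjugate self-dual parameters of parity $(-1)^{n-1}$.

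For the multiplicity identity I would argue that both sides take values in $\{0,1\}$ and vanish on the same representations. On the right, Lemma \ref{prop-injective} makes $\Phi$ injective, so $|\Phi^{-1}(\widetilde{\rho}_{\widetilde{\pi}})|$ equals $1$ when $\widetilde{\rho}_{\widetilde{\pi}}\in\mathrm{Im}(\Phi)$ and $0$ otherwise. On the left, Flicker's Gelfand--Kazhdan argument (Proposition \ref{prop-flicker}, whose method applies verbatim to distinction against any character $\eta\circ\det$ of $\GL_n(F)$, in particular to $\eta=\omega_{E/F}$) gives $\dim_\C\Hom_{\GL_n(F)}[\widetilde{\pi},\eta]\le 1$, the value $1$ being attained exactly when $\widetilde{\pi}$ is $\eta$-distinguished. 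Combining this with the iff clause, the left side is $1$ iff $\widetilde{\rho}_{\widetilde{\pi}}\in\mathrm{Im}(\Phi)$ and $0$ otherwise, which is exactly the right side.

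The only genuine subtlety, and the step I would be most careful about, is the parity/character bookkeeping for $n$ even: the image of $\Phi$ consists of conjugate symplectic parameters (parity $-1$), and these correspond to $\omega_{E/F}$-distinguished rather than trivially distinguished representations, so one must take $\eta=\omega_{E/F}$ and extend the multiplicity-one input to that character accordingly. Once the character is chosen to match the parity, the theorem is a formal consequence of injectivity of $\Phi$ together with multiplicity one, and there is no further analytic content to supply.
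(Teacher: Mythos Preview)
Your proposal is correct and matches the paper's own treatment: the paper does not give a separate proof but introduces this theorem as a direct restatement of Theorem~\ref{conj-fr} ``taking into account Lemma~\ref{prop-injective},'' which is exactly your assembly of the Flicker--Rallis characterization, injectivity of $\Phi$, and Flicker's multiplicity one. Your observation that the target $\C$ in \eqref{eq-gln} must be read as $\eta$ (trivial for $n$ odd, $\omega_{E/F}$ for $n$ even) is a valid clarification of the statement and is implicit in the paper's parenthetical.
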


The main theorem of this paper is the $\SL(n)$-analogue of Theorem \ref{thm-gln}.

\begin{theorem}\label{thm-sln}
An irreducible admissible generic representation $\pi$ 
of $\SL_n(E)$ is distinguished by $\SL_n(F)$ if and only if 
\begin{enumerate}
\item its Langlands parameter $\rho_\pi$ is in the image of the base change map:
\[P\Phi:H^1(W^\prime_F,\PGL_n(\mathbb C)[\tau] ) \rightarrow H^1(W^\prime_E,\PGL_n(\mathbb C)),\]
\item $\pi$ has a Whittaker model for a non-degenerate character of $N(E)/N(F)$.
\end{enumerate}
Further, if ${\rm Hom}_{\SL_n(F)}[\pi,\C] \not = 0$,
\begin{equation}\label{eq-sln}
 \dim_\C {\rm Hom}_{\SL_n(F)}[\pi,\C] = |P\Phi^{-1}(\rho_\pi)|.
\end{equation}
\end{theorem}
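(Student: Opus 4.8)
The proof of Theorem~\ref{thm-sln} assembles the representation-theoretic computation of \S\ref{sub-sln-dist} with the parameter-side computation of \S\ref{sub-parameters}, bridged by the Whittaker-model input of \S\ref{sec-whittaker}. The strategy is to show that both sides of \eqref{eq-sln} equal the common quantity $q(\widetilde\pi)$, the number of strong equivalence classes inside the weak equivalence class of $\widetilde\pi$ among conjugate orthogonal representations. First I would establish the two implications of the ``if and only if.'' For the forward direction, suppose $\pi$ is distinguished by $\SL_n(F)$. By Proposition~\ref{AP} (and its $\SL_2$ analogue Lemma~\ref{AP3}), condition~(2) is automatic for tempered $\pi$; more generally the Whittaker hypothesis is exactly what lets $\pi$ be located inside the restriction of a $\GL_n(F)$-distinguished $\widetilde\pi$, as discussed in \S\ref{sec-whittaker}. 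Then $\widetilde\pi$ is conjugate orthogonal by Theorem~\ref{conj-fr}, so $\widetilde\rho_{\widetilde\pi}$ lies in the image of $\Phi$, and pushing down via $P_E$ places $\rho_\pi$ in the image of $P\Phi$ by the commutative diagram; this gives~(1).

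\emph{The conclusion via matching cardinalities.}
For the dimension formula I would invoke Proposition~\ref{thm-qpi}, which already identifies
\[
\dim_\C \Hom_{\SL_n(F)}[\pi,\C] = q(\widetilde\pi),
\]
the cardinality of $[\widetilde\pi]_w/\!\sim_s$ inside conjugate orthogonal representations. On the parameter side, Proposition~\ref{thm-fiber} identifies $|P\Phi^{-1}(\rho_\pi)|$ with $q(\Phi(\widetilde\rho))$, the number of strong equivalence classes in the weak equivalence class of $\Phi(\widetilde\rho)$ among conjugate self-dual representations of parity $(-1)^{n-1}$. The heart of the matter is that these two counts agree: the local Langlands correspondence for $\GL_n(E)$ matches an irreducible generic $\widetilde\pi$ with its parameter $\widetilde\rho_{\widetilde\pi}$ compatibly with twisting by characters of $E^\times$ (weak equivalence) and by characters of $E^\times/F^\times$ (strong equivalence), and carries the class of conjugate orthogonal generic representations onto the class of conjugate self-dual parameters of the correct parity. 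Hence $q(\widetilde\pi) = q(\Phi(\widetilde\rho))$, and~\eqref{eq-sln} follows by transitivity.

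\emph{The main obstacle.}
The delicate point, and the step I expect to require the most care, is the \emph{parity} bookkeeping: Theorem~\ref{conj-fr} phrases distinction for $\GL_n(F)$ in terms of \emph{conjugate orthogonal} representations, whereas Proposition~\ref{thm-fiber} counts strong classes among conjugate self-dual parameters \emph{of a given parity} $(-1)^{n-1}$, which for $n$ even is the conjugate symplectic convention. One must verify that twisting by a character $\chi$ of $E^\times/F^\times$ preserves conjugate orthogonality precisely when the lift to the $\U(n)$-side stays within the image of $\Phi$, i.e.\ that the set $[\widetilde\pi]_w/\!\sim_s$ computed among conjugate orthogonal representations is in canonical bijection with the fiber of $P\Phi$ and not with the fiber of the symplectic base change. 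Lemma~\ref{prop-injective} (injectivity of $\Phi$) is what makes this bijection clean: it guarantees that distinct strong classes of conjugate orthogonal $\GL_n(E)$-parameters come from distinct $\U(n)$-parameters, so no collapsing occurs when passing to $\PGL_n$ via $P_F$ and $P_E$. Once the parity conventions are reconciled and the square-root criterion $\chi'(-1)=1$ from Lemma~\ref{lem-tate} is tracked through, the equality of the two $q$-values is forced, completing the proof.
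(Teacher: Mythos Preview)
Your outline is exactly the paper's proof: Proposition~\ref{thm-qpi} gives $\dim_\C\Hom_{\SL_n(F)}[\pi,\C]=q(\widetilde\pi)$, Proposition~\ref{thm-fiber} gives $|P\Phi^{-1}(\rho_\pi)|=q(\Phi(\widetilde\rho))$, the commutative diagram and \S\ref{sec-whittaker} supply conditions~(1) and~(2), and the local Langlands correspondence identifies the two $q$-values.

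The parity worry you raise is legitimate---for $n$ even the image of $\Phi$ consists of conjugate \emph{symplectic} parameters while $q(\widetilde\pi)$ is defined among conjugate \emph{orthogonal} ones---and the paper simply writes $q(\widetilde\pi)=q(\widetilde\rho_{\widetilde\pi})$ without comment. The resolution, however, is not Lemma~\ref{lem-tate} or the condition $\chi'(-1)=1$ (that lemma is about surjectivity of $P_F$, not about parity of strong classes); rather, one fixes any extension $\widetilde\omega$ of $\omega_{E/F}$ to $E^\times$, notes that $\widetilde\omega$ is a conjugate symplectic character (since $\omega_{E/F}\circ\Nm=1$), and observes that $\widetilde\pi\mapsto\widetilde\pi\otimes\widetilde\omega$ is a parity-swapping bijection on the weak equivalence class which commutes with twisting by characters of $E^\times/F^\times$, hence induces a bijection of strong equivalence classes. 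So the two $q$-counts agree and your obstacle dissolves in one line.
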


\begin{proof}
Choose $\widetilde{\pi}$ as in Proposition \ref{thm-qpi} and $\widetilde{\rho}$ as in Proposition \ref{thm-fiber} so that $\Phi(\widetilde{\rho})=\widetilde{\rho}_{\widetilde{\pi}}$. Such a choice does exist by the first part of Theorem \ref{thm-gln}. Thus, the assertion $(1)$ about Langlands parameter $\rho_\pi$ follows from the commutativity of the diagram:
\[ 
\xymatrix@C=2pc@R=2pc{
H^1(W^\prime_F,\PGL_n(\mathbb C)[\tau] ) \ar[r]^{P\Phi} & H^1(W^\prime_E,\PGL_n(\mathbb C)) \\
H^1(W^\prime_F,\GL_n(\mathbb C)[\tau] ) \ar[u]_{P_F} \ar[r]^{\Phi} & H^1(W^\prime_E,\GL_n(\mathbb C)) \ar[u]_{P_E}.
}
\]
The assertion $(2)$ about Whittaker models is part of the conclusion of \S \ref{sec-whittaker}.

For the assertion on $ \dim_\C {\rm Hom}_{\SL_n(F)}[\pi,\C]$, observe that the left hand side of (\ref{eq-sln}) is $q(\widetilde{\pi})$ by Proposition \ref{thm-qpi}, whereas the right hand side of (\ref{eq-sln}) is $q(\widetilde{\rho}_{\widetilde{\pi}})$ by Proposition \ref{thm-fiber}. Since $q(\widetilde{\pi})=q(\widetilde{\rho}_{\widetilde{\pi}})$,
this proves the theorem. \end{proof}

\end{document}